\newcommand{\adj}{\rightleftarrows}
\newcommand{\ten}{\otimes}
\newcommand{\cA}{\mathcal{A}}
\newcommand{\cB}{\mathcal{B}}
\newcommand{\cC}{\mathcal{C}}
\newcommand{\cD}{\mathcal{D}}
\newcommand{\cF}{\mathcal{F}}
\newcommand{\cM}{\mathcal{M}}
\newcommand{\cS}{\mathcal{S}}
\newcommand{\cW}{\mathcal{W}}
\newtheorem{thm}{Theorem}[section]
\newtheorem{cor}[thm]{Corollary}
\newtheorem{lem}[thm]{Lemma}
\newtheorem{prop}[thm]{Proposition}
\theoremstyle{definition}
\newtheorem{define}[thm]{Definition}
\theoremstyle{remark}
\newtheorem{rem}[thm]{Remark}
\def\Csep{\mathtt{SC^*}}
\def\CCsep{\mathtt{CSUC^*}}
\def\R{\textup{R}}
\def\K{\textup{K}}
\DeclareMathOperator{\Top}{Top}
\DeclareMathOperator{\Sing}{Sing}
\DeclareMathOperator{\Pro}{Pro}
\DeclareMathOperator{\Ind}{Ind}
\DeclareMathOperator{\Map}{Map}
\DeclareMathOperator{\id}{id}
\DeclareMathOperator{\Hom}{Hom}
\DeclareMathOperator{\Mor}{Mor}
\DeclareMathOperator{\precolim}{colim}
\DeclareMathOperator{\op}{op}
\DeclareMathOperator{\CM}{CM}
\DeclareMathOperator{\Lw}{Lw}
\DeclareMathOperator{\coSp}{coSp}
\def\colim{\mathop{\precolim}}
\def \mcal{\mathcal}
\DeclareTextFontCommand{\textcyr}{\fontencoding{OT2}\fontfamily{wncyr}\fontseries{m}\fontshape{n}\selectfont}
\begin{document}
\title{The two out of three property in ind-categories and a convenient model category of spaces}

\author{Ilan Barnea}

\maketitle

\begin{abstract}
In \cite{BaSc2}, the author and Tomer Schlank introduced a much weaker homotopical structure than a model category, which we called a ``weak cofibration category". We further showed that a small weak cofibration category induces in a natural way a model category structure on its ind-category, provided the ind-category satisfies a certain two out of three property. The main purpose of this paper is to give sufficient intrinsic conditions on a weak cofibration category for this two out of three property to hold. We consider an application to the category of compact metrizable spaces, and thus obtain a model structure on its ind-category. This model structure is defined on a category that is closely related to a category of topological spaces and has many convenient formal properties. A more general application of these results, to the (opposite) category of separable $C^*$-algebras, appears in a paper by the author, Michael Joachim and Snigdhayan Mahanta \cite{BJM}.
\end{abstract}

\tableofcontents

\section{Introduction}
In \cite{BaSc1} the author and Tomer Schlank introduced the concept of a weak fibration category. This is a category $\cC$, equipped with two subcategories of weak equivalence and fibrations, satisfying certain axioms. This notion is closely related to the notion of a category of fibrant objects due to Brown \cite{Bro}. The only difference is that we require arbitrary pullbacks to exist, and we do not require every object to be fibrant. A weak fibration category is a much weaker notion than a model category and its axioms are much easily verified.

If $\cC$ is any category, its pro-category $\Pro(\cC)$ is the category of inverse systems in $\cC$. That is, objects in $\Pro(\cC)$ are diagrams $I\to\cC$, with $I$ a cofiltered category. If $X$ and $Y$ are objects in $\Pro(\cC)$ having the same indexing category, then a natural transformation $X\to Y$ defines a morphism in $\Pro(\cC)$, but morphisms in $\Pro(\cC)$ are generally more flexible.

Given a weak fibration category $\cC$, there is a very natural way to induce a notion of weak equivalence on the pro-category $\Pro(\cC)$. Namely, we define the weak equivalences in $\Pro(\cC)$ to be the smallest class of maps that contains all (natural transformations that are) levelwise weak equivalences, and is closed under isomorphisms of maps in $\Pro(\cC)$. If $\cW$ is the class of weak equivalences in $\cC$, then we denote the class of weak equivalences in $\Pro(\cC)$ by $\Lw^{\cong}(\cW)$. The maps in $\Lw^{\cong}(\cW)$ are called \emph{essentially levelwise weak equivalences} by Isaksen \cite{Isa}. Note, however, that $\Lw^{\cong}(\cW)$ may not satisfy the two out of three property. Weak fibration categories for which $\Lw^{\cong}(\cW)$ satisfies the two out of three property are called pro-admissible.

The main result in \cite{BaSc1} is that a pro-admissible weak fibration category $\cC$ induces in a natural way a model structure on $\Pro(\cC)$, provided $\cC$ has colimits and satisfies a technical condition called \emph{homotopically small}. In \cite{BaSc2}, we explain that an easy consequence of this result is that any small pro-admissible weak fibration category $\cC$ induces a model structure on $\Pro(\cC)$.

Dually, one can define the notion of a weak \emph{cofibration} category (see Definition \ref{d:weak_fib}), and deduce that a small \emph{ind}-admissible weak cofibration category induces a model structure on its \emph{ind}-category (which is the dual notion of a pro-category). This is the setting in which we work with in this paper, however, everything we do throughout the paper is completely dualizable, so it can also be written in the ``pro" picture.

Since the axioms of a weak cofibration category are usually not so hard to verify, the results above essentially reduce the construction of model structures on ind-categories of small categories to the verification of the two out of three property for $\Lw^{\cong}(\cW)$. This is usually not an easy task.

The main purpose of this paper is to prove theorems giving sufficient intrinsic conditions on a weak cofibration category from which one can deduce its ind-admissibility. We apply these results to the category of compact metrizable spaces. This category has a natural weak cofibration structure, which we show is ind-admissible. We do not know to deduce the ind-admissibility of this weak cofibration category using the methods given in \cite{BaSc2} (see Remark \ref{r:bs2}).

We will now state our main result. For this, we first need a definition:
\begin{define}
Let $\cC$ a weak cofibration category. An object $D$ in $\cC$ is called:
\begin{enumerate}
\item Cofibrant, if the unique map $\phi\to D$ from the initial object is a cofibration.
\item Fibrant, if for every acyclic cofibration $A\to B$ in $\cM$ and every diagram of the form
$$
\xymatrix{ A \ar[d]\ar[r] & D\\
 B & }
$$
there is a lift $B\to D$.
\end{enumerate}
\end{define}

We can now formulate our main result in this paper which is the following criterion for ind-admissibility:
\begin{thm}[see Theorem \ref{t:main2}]\label{t:main2 0}
Let $\cC$ be a small simplicial weak cofibration category (see Definition \ref{d:tensored}). Suppose that the following conditions are satisfied:
\begin{enumerate}
\item $\cC$ has finite limits.
\item Every object in $\cC$ is fibrant and cofibrant.
\item A map in $\cC$ that is a homotopy equivalence in the simplicial category $\cC$ is also a weak equivalence.
\end{enumerate}
Then $\cC$ is ind-admissible.
\end{thm}

In this paper we bring an application to the category of compact metrizable spaces and continuous maps, denoted $\CM$. This category has a natural weak cofibration structure which we now define.
\begin{define}
A map $i: X\to Y$ in $\CM$ is called a \emph{Hurewicz cofibration} if for every $Z\in\CM$ and every diagram of the form
$$
\xymatrix{ \{0\}\times Y\coprod_{\{0\}\times X}[0,1]\times X\ar[d]\ar[r] & Z\\
 [0,1]\times Y & }
$$
there exists a lift $[0,1]\times Y\to  Z$.
\end{define}

\begin{prop}[\ref{p:CM is special}, \ref{p:Hurewicz}]
With the weak equivalences being the homotopy equivalences and the cofibrations being the Hurewicz cofibrations, $\CM$ becomes a weak cofibration category that satisfies the hypothesis of Theorem \ref{t:main2 0}. Thus $\CM$ is ind-admissible.
\end{prop}

\begin{rem}\label{r:bs2}
  In \cite{BaSc2} we also give sufficient intrinsic conditions on a weak cofibration category for its ind-admissibility. Namely, we show in \cite[Corollary 3.8]{BaSc2} that if our weak cofibration category has \emph{proper factorizations} then it is ind-admissible. The author does not know a way to deduce the ind-admissibility of $\CM$ from this result since the author does not know of any factorization in $\CM$ into a weak equivalence followed by a right proper map (see Definition \ref{d:proper} and Proposition \ref{p:compose}).
\end{rem}

Since $\CM$ is ind-admissible, we have an induced model structure on the category $\Ind(\CM)$. We further show in Theorem \ref{t:main4} that:
\begin{thm}\label{t:main4 0}
Let $\cW$ denote the class of weak equivalences and let $\cC$ denote the class of cofibrations in $\CM$. Then there exists a simplicial model category structure on $\Ind(\CM)$ such that:
\begin{enumerate}
\item The weak equivalences are $\mathbf{W} := \Lw^{\cong}(\cW)$.
\item The fibrations are $\mathbf{F} :=(\cC\cap \cW)^{\perp}$.
\end{enumerate}

Moreover, this model category is finitely combinatorial, with set of generating cofibrations $\cC$ and set of generating acyclic cofibrations $\cC\cap \cW$.

The model category $\Ind(\CM)$ has the following further properties:
\begin{enumerate}
\item Every object in $\Ind(\CM)$ is fibrant.
\item $\Ind(\CM)$ is proper.
\item $\Ind(\CM)$ is a cartesian monoidal model category.
\end{enumerate}
\end{thm}

It is well known that the category of topological spaces, with its usual model structure (with weak homotopy equivalences and Serre fibrations), while being a fundamental object in homotopy theory, lacks some convenient properties as a model category. For example, it is not cartesian closed and not combinatorial. Cartesian closure can easily be achieved by restricting to the subcategory of \emph{compactly generated Hausdorff} spaces, however, this category of spaces is still not locally presentable, and thus, not combinatorial. Simplicial sets form a Quillen equivalent combinatorial (and cartesian closed) model category, however, simplicial sets are discrete combinatorial objects and are quite far from the geometric intuition. For some examples arising from geometric situations, it is useful to have a model for topological spaces which is both convenient as a model category, and who's objects have a more topological nature. One attempt to solve this problem is J. H. Smith's idea of \emph{delta-generated spaces} (see \cite{FaRo}).

The model structure that we construct on $\Ind(\CM)$ can be viewed as another possible solution to this problem. First, as we have shown, it has many convenient properties. It is simplicial, cartesian closed, finitely combinatorial, proper and every object in it is fibrant. On the other hand, the underlying category of this model category contains, as a full reflective subcategory, a very large category of spaces (which includes all $CW$-complexes). Furthermore, it can be shown that a colocalization of this model structure is Quillen equivalent to the usual model structure on topological spaces. These last two claims will be addressed in a future paper.

The Gel'fand--Na{\u{\i}}mark correspondence implies that the category of compact metrizable spaces is equivalent to the opposite category of commutative separable unital $C^*$-algebras, which we denote $\CCsep$. Since we have natural equivalences of categories
$$\Ind(\CM)^{\op}\simeq\Pro(\CM^{\op})\simeq\Pro(\CCsep),$$
we see that we obtain a model structure also on the pro-category of commutative separable unital $C^*$-algebras, possessing (the dual of) all the above mentioned properties.

A more general application of our main result in this paper is to the weak fibration category of separable $C^*$-algebras denoted $\Csep$. This application appears in the paper \cite{BJM} by the author, Michael Joachim and Snigdhayan Mahanta. In \cite{BJM}, we use the resulting model structure on $\Pro(\Csep)$ to extend Kasparov's bivariant $\K$-theory category (as well as other bivariant homology theories) from separable $C^*$-algebras to \emph{projective systems} of separable $C^*$-algebras (that is, to objects of $\Pro(\Csep)$).

\subsection{Organization of the paper}

We begin in Section \ref{s:prelim} with a brief account of the necessary background on ind-categories and homotopy theory in ind-categories. We also define the notion of a quasi model category. This notion is weaker then a model category but stronger then the notion of an almost model category, defined in \cite{BaSc2}. In Section \ref{s:left quasi model} we prove results about quasi model categories (generalized from the theory of model categories) that we will need in the next section. In Section \ref{s:Criteria}, we prove our main result of this paper, Theorem \ref{t:main2}, which gives sufficient conditions for the ind-admissibility of a weak cofibration category. We end with Section \ref{s:CM} in which we give an application to the weak cofibration category of compact metrizable spaces.

\subsection{Acknowledgements}
I would like to thank Tomer Schlank, Michael Joachim and Snigdhayan Mahanta for many fruitful conversations. Especially, I would like to thank the last two for the idea of the proof of Proposition \ref{l:right_proper}.

\section{Preliminaries: homotopy theory in ind-categories}\label{s:prelim}
In this section we review the necessary background on ind-categories and homotopy theory in ind-categories. The results presented here are not new, but we recall them for the convenience of the reader. We do bring one new definition, that of a \emph{quasi model category} (see Definition \ref{d:quasi model}). Most of the references that we quote are written for pro-categories, but we bring them here translated to the ``ind" picture which we use in this paper. Standard references on pro-categories include \cite{AM} and \cite{SGA4-I}. For the homotopical parts the reader is referred to \cite{BaSc}, \cite{BaSc1}, \cite{BaSc2}, \cite{BaSc3}, \cite{EH} and \cite{Isa}.

\subsection{Ind-categories}
In this subsection we bring general background on ind-categories.

A non-empty category $I$ is called \emph{filtered}  if the following conditions hold: for every pair of objects $s$ and $t$ in $I$, there exists an object $u\in I$, together with
morphisms $s\to u$ and $t\to u$; and for every pair of morphisms $f$ and $g$ in $I$, with the same
source and target, there exists a morphism $h$ in $I$ such that $h\circ f=h\circ g$. A category is called \emph{small} if it has only a set of objects and a set of morphisms.

If $\mcal{C}$ is any category, the category $\Ind(\mcal{C})$ has as objects all diagrams in $\cC$ of the form $I\to \cC$ such that $I$ is small and filtered. The morphisms are defined by the formula
$$\Hom_{\Ind(\mcal{C})}(X,Y):=\lim_s \colim_t \Hom_{\mcal{C}}(X_s,Y_t).$$
Composition of morphisms is defined in the obvious way.

Thus, if $X:I\to \mcal{C}$ and $Y:J\to \mcal{C}$ are objects in $\Ind(\mcal{C})$, providing a morphism $X\to Y$ means specifying for every $s$ in $I$ an object $t$ in $J$ and a morphism $X_s\to Y_t$ in $\mcal{C}$. These morphisms should satisfy a compatibility condition. In particular, if $p:I\to J$ is a functor, and $\phi:X\to Y\circ p=p^*Y$ is a natural transformation, then the pair $(p,\phi)$ determines a morphism $\nu_{p,\phi}:X\to Y$ in $\Ind(\cC)$ (for every $s$ in $I$ we take the morphism $\phi_s:X_{s}\to Y_{p(s)}$). Taking $X=p^*Y$ and $\phi$ to be the identity natural transformation, we see that any $p:I\to J$ determines a morphism $\nu_{p,Y}:p^*Y\to Y$ in $\Ind(\cC)$. If $I=J$ and we take $p$ to be the identity functor, we see that any natural transformation $X\to Y$ induces a morphism in $\Ind(\cC)$.

The functor $p:I\to J$ is called \emph{(right) cofinal} if for every $j$ in $J$ the over category ${p}_{j/}$ is nonempty and connected. If $p$ is cofinal then the morphism it determines, $\nu_{p,Y}:p^*Y\to Y$, is an isomorphism.

The word ind-object refers to objects of ind-categories. A \emph{simple} ind-object
is one indexed by the category with one object and one (identity) map. Note, that for any category $\cC$, there is a natural isomorphism between  $\cC$ and the full subcategory of $\Ind(\mcal{C})$ spanned by the simple objects. In the sequel we will abuse notation and consider objects and morphisms in $\cC$ as objects and morphisms in $\Ind(\cC)$, through this isomorphism.

If $T$ is a poset, then we view $T$ as a category which has a single morphism $u\to v$ iff $u\leq v$. Thus, a poset $T$ is filtered iff $T$ is non-empty, and for every $a,b$ in $T$ there exists an element $c$ in $T$ such that $c\geq a,b$. A filtered poeset will also be called \emph{directed}. A poset $T$ is called cofinite if for every element $x$ in $T$ the set $T_x:=\{z\in T| z \leq x\}$ is finite.

Let $\mcal{C}$ be a category with finite colimits and $M$ a class of morphisms in $\mcal{C}$. If $I$ is a small category and $F:X\to Y$ a morphism in $\mcal{C}^I$, then:
\begin{enumerate}
\item The map $F$ will be called a \emph{level-wise $M$-map}, if for every $i\in I$ the morphism $X_i\to Y_i$ is in $M$. We will denote this by $F\in \Lw(M)$.
\item The map $F$ will be called a \emph{cospecial} $M$-\emph{map}, if $I$ is a cofinite poset and for every $t\in I$ the natural map
$$X_t\coprod_{\colim_{s<t} X_s} \colim_{s<t} Y_s \to Y_t  $$
is in $M$. We will denote this by $F\in \coSp(M)$.
\end{enumerate}

We denote by $\R(M)$ the class of morphisms in $\mcal{C}$ that are retracts of morphisms in $M$. We denote by $M^{\perp}$ (resp. ${}^{\perp}M$) the class of morphisms in $\mcal{C}$ having the right (resp. left) lifting property with respect to all the morphisms in $M$.

We denote by $\Lw^{\cong}(M)$ the class of morphisms in $\Ind(\mcal{C})$ that are \textbf{isomorphic} to a morphism that comes from a natural transformation which is a levelwise $M$-map. The maps in $\Lw^{\cong}(\cW)$ are called \emph{essentially levelwise weak equivalences} by Isaksen \cite{Isa}.
We denote by $\coSp^{\cong}(M)$ the class of morphisms in $\Ind(\mcal{C})$ that are \textbf{isomorphic} to a morphism that comes from a natural transformation which is a cospecial $M$-map.

\subsection{From a weak cofibration category to a quasi model category}

In \cite{BaSc2} the notion of an \emph{almost model category} was introduced. It is a weaker notion then a model category that was used as an auxiliary notion which was useful in showing that certain weak cofibration categories are ind-admissible. In this paper we also consider a similar auxiliary notion, and for the same purpose. However, it would be more convenient for us to consider a slightly stronger notion, which we call \emph{a quasi model category}. A quasi model category has the same structure as a model category and satisfies all the axioms of a model category, except (maybe) one third of the two out of three property for its weak equivalences. Namely:

\begin{define}\label{d:quasi model}
A quasi model category is a quadruple $(\cM,\cW,\cF,\cC)$ satisfying the following axioms:
\begin{enumerate}
\item $\cM$ is complete and cocomplete.
\item $\cW,\cF,\cC$ are subcategories of $\cM$ that are closed under retracts.
\item For every pair $X\xrightarrow{f} Z\xrightarrow{g} Y $ of composable morphisms in $\cC$, we have that $f,g\circ f\in\cW$ implies $g\in \cW$.
\item $\cC\cap \cW\subseteq{}^{\perp}\cF$  and $\cC\subseteq{}^{\perp}(\cF\cap\cW)$.
\item There exist functorial factorizations in $\cM$ into a map in $\cC\cap \cW$ followed by a map in $\cF$ and into a map in $\cC$ followed by a map in $\cF\cap \cW$.
\end{enumerate}
\end{define}

A quasi model category is clearly a weaker notion then a model category. We also recall the following notion from \cite{BaSc2}:
\begin{define}\label{d:weak_fib}
A \emph{weak cofibration category} is a category ${\cC}$ with an additional
structure of two subcategories
$${\cC of}, {\cW} \subseteq {\cC}$$
that contain all the isomorphisms such that the following conditions are satisfied:
\begin{enumerate}
\item ${\cC}$ has all finite limits.
\item ${\cW}$ has the two out of three property.
\item The subcategories ${\cC of}$ and ${\cC of}\cap {\cW}$ are closed under cobase change.
\item Every map $A\to B $ in ${\cC}$ can be factored as $A\xrightarrow{f} C\xrightarrow{g} B $,
where $f$ is in ${\cC of}$ and $g$ is in ${\cW}$.
\end{enumerate}
The maps in ${\cC of}$ are called \emph{cofibrations}, and the maps in ${\cW}$ are called \emph{weak equivalences}.
\end{define}

\begin{define}\label{d:almost_admiss_dual}
A weak cofibration category $(\cC,\cW,\cC of)$ is called
\begin{enumerate}
  \item ind-admissible, if the class $\Lw^{\cong}(\cW)$, of morphisms in $\Ind(\cC)$, satisfies the two out of three property.
  \item almost ind-admissible, if the class $\Lw^{\cong}(\cW)$, of morphisms in $\Ind(\cC)$, satisfies the following portion of the two out of three property:

For every pair $X\xrightarrow{f} Z\xrightarrow{g} Y $ of composable morphisms in $\Ind(\cC)$ we have:
\begin{enumerate}
\item If $f,g$ belong to $\Lw^{\cong}(\cW)$ then $g\circ f\in \Lw^{\cong}(\cW)$.
\item If $f,g\circ f$ belong to $\Lw^{\cong}(\cW)$ then $g\in \Lw^{\cong}(\cW)$.
\end{enumerate}
\end{enumerate}
\end{define}

\begin{thm}[{\cite[Theorem 3.14]{BaSc2}}]\label{t:almost_model_dual}
Let $(\cC,\mcal{W},\cC of)$ be a small almost ind-admissible weak cofibration category.
Then there exists a quasi model category structure on $\Ind(\cC)$ such that:
\begin{enumerate}
\item The weak equivalences are $\mathbf{W} := \Lw^{\cong}(\mcal{W})$.
\item The fibrations are $\mathbf{F} := (\cC of\cap \mcal{W})^{\perp} $.
\item The cofibrations are $\mathbf{C} := \R(\coSp^{\cong}(\cC of))$.
\end{enumerate}
Furthermore, we have $\mathbf{F}  \cap \mathbf{W}=  \cC^{\perp}$ and $\mathbf{C}\cap\mathbf{W} = \R(\coSp^{\cong}(\cC of\cap{\cW})).$
\end{thm}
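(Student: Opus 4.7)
The plan is to follow the proof of Theorem \ref{t:model_dual} almost verbatim, noting that the passage from a weak cofibration category to a model (resp.\ almost model) structure on its ind-category relies on the two-out-of-three property for $\Lw^{\cong}(\cW)$ only through the composition axiom and one of the two ``cancellation'' directions --- both of which are supplied by almost ind-admissibility. Equivalently, one may pass to $\cM^{\mathrm{op}}$, which is an almost pro-admissible weak fibration category, and apply the pro-version from \cite{BaSc2}, dualizing the resulting almost model structure back along the identification $\Pro(\cM^{\mathrm{op}})^{\mathrm{op}} \cong \Ind(\cM)$ (under which $\Lw^{\cong}$ matches $\Lw^{\cong}$, $\Sp^{\cong}$ on the pro side matches $\coSp^{\cong}$ on the ind side, and the left/right lifting classes swap).

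The central construction is the small object argument applied to the sets $\cC$ and $\cC\cap\cW$. Smallness of $\cM$ ensures that every object of $\cM$ is finitely presentable in $\Ind(\cM)$, so the domains and codomains of the generating maps are $\omega$-small and the argument runs with transfinite composites indexed by $\omega$. It produces, for every morphism $f$ in $\Ind(\cM)$, functorial factorizations $f = p \circ i$ with $i$ a transfinite composite of pushouts of maps in $\cC$ and $p \in \cC^{\perp}$, and $f = q \circ j$ with $j$ a transfinite composite of pushouts of maps in $\cC\cap\cW$ and $q \in (\cC\cap\cW)^{\perp} = \mathbf{F}$. Reindexing these transfinite composites as cofinite filtered diagrams (a purely categorical manipulation) identifies them, up to retract and isomorphism in $\Ind(\cM)$, with members of $\coSp^{\cong}(\cC)$ and $\coSp^{\cong}(\cC\cap\cW)$ respectively; this gives $\mathbf{C} \supseteq \R(\coSp^{\cong}(\cC))$ and, modulo showing such maps lie in $\mathbf{W}$, the second ``furthermore'' identity. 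Closure under retracts and the lifting inclusions $\mathbf{C}\cap\mathbf{W} \subseteq {}^{\perp}\mathbf{F}$ and $\mathbf{C} \subseteq {}^{\perp}(\mathbf{F}\cap\mathbf{W})$ are then formal.

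The main obstacle is the identity $\mathbf{F}\cap\mathbf{W} = \cC^{\perp}$ and, equivalently, the fact that the $j$ constructed above genuinely lies in $\mathbf{W}$. The standard retract trick handles the inclusion $\cC^{\perp} \subseteq \mathbf{F}\cap\mathbf{W}$: given $p \in \cC^{\perp}$, factor $p = q \circ j$ with $j \in \mathbf{C}\cap\mathbf{W}$ and $q \in \mathbf{F}$, lift $p$ against itself to exhibit $p$ as a retract of $j$, and use closure of $\mathbf{W}$ under retracts to reduce to $j \in \mathbf{W}$. Since $\cC\cap\cW$ is closed under cobase change in the weak cofibration structure on $\cM$, each pushout of a generating trivial cofibration lies levelwise in $\cW$; the composition direction of almost ind-admissibility then stitches the whole transfinite composite into a map in $\Lw^{\cong}(\cW)$. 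The remaining identity $\mathbf{C}\cap\mathbf{W} = \R(\coSp^{\cong}(\cC\cap\cW))$ reduces to the same composition argument, together with the inclusion $\R(\coSp^{\cong}(\cC\cap\cW)) \subseteq \mathbf{C}$ already noted.
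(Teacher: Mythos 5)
The paper does not actually prove this statement; it imports it verbatim from \cite[Theorem 3.14]{BaSc2}, so your proposal can only be measured against the argument in that reference, whose key device --- the levelwise (Reedy) spreading-out of factorizations over cofinite posets --- is visible in the present paper in the proof of Theorem \ref{t:admiss_iff}. Your outline has a structural problem: nowhere do you invoke the factorization axiom $\Mor(\cM)=\cW\circ\cC$ of the weak cofibration category, which is the only axiom relating $\cW$ to $\cC$, and both hard points of the theorem collapse without it. Concretely, your retract trick for $\cC^{\perp}\subseteq\mathbf{F}\cap\mathbf{W}$ does not close. If $p=q\circ j$ with $j$ a relative $(\cC\cap\cW)$-cell complex and $q\in(\cC\cap\cW)^{\perp}$, then $j$ has the left lifting property against $p$, and the retract argument exhibits $p$ as a retract of $q$ --- not of $j$, as you write. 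Since $q$ is only known to lie in $\mathbf{F}$ and not in $\mathbf{W}$, retract-closure of $\mathbf{W}$ gives nothing; and running the same argument with the other small-object factorization is circular. What is actually needed is a factorization of an arbitrary map of $\Ind(\cM)$ as a map in $\coSp^{\cong}(\cC)$ followed by a map in $\Lw^{\cong}(\cW)$, obtained by spreading the factorization $\Mor(\cM)=\cW\circ\cC$ out levelwise (as in \cite[Theorem 5.7]{BaSc}); then $p$ is a retract of the second factor, which lies in $\Lw^{\cong}(\cW)$, and Lemma \ref{l:ret_lw} finishes.

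The second gap is the claim that $\mathbf{C}\subseteq{}^{\perp}(\mathbf{F}\cap\mathbf{W})$ is ``formal''. Since $\mathbf{C}={}^{\perp}(\cC^{\perp})$, this inclusion is equivalent to $\mathbf{F}\cap\mathbf{W}\subseteq\cC^{\perp}$, i.e.\ to the hard half of the first ``furthermore'' identity: one must show that a map of $\Ind(\cM)$ which is simultaneously in $\Lw^{\cong}(\cW)$ and has the right lifting property against $\cC\cap\cW$ in fact lifts against all of $\cC$. This is the technical heart of \cite{BaSc1} and \cite{BaSc2} (a lifting lemma proved by representing the map as a levelwise $\cW$-map over a cofinite poset, factoring level by level in $\cM$, and assembling the lifts), and no purely formal manipulation of lifting classes produces it. More mildly, the identification of relative $\cC$-cell complexes with $\R(\coSp^{\cong}(\cC))$ is not a ``purely categorical manipulation'' but a substantive reindexing result, though it is available in the references you would be citing anyway.
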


\begin{rem}\label{t:model}
If, in Theorem \ref{t:almost_model_dual}, the weak cofibration category $(\cC,\cW,\cC of)$ is also ind-admissible, then the quasi model structure on $\Ind(\cC)$ described there is clearly a model structure.
\end{rem}

The following notion is due to to Dwyer-Kan \cite{DwKa} and was further developed by Barwick-Kan \cite{BaKa}:
\begin{define}\label{d:rel}
A relative category is a pair $(\cC,\cW)$, consisting of a category
$\cC$, and a subcategory $\cW\subseteq \cC$ that contains all the
isomorphisms and satisfies the two out of three property. $\cW$ is called the subcategory of weak equivalences.
\end{define}
We now recall the notions of left and right proper maps in relative categories and the relation of these concepts to the almost ind-admissibility condition, appearing in Theorem \ref{t:almost_model_dual}.

\begin{define}\label{d:proper}
Let $(\cC,\cW)$ be a relative category. A map $f:A\to B$ in $\cC$ will be called:
\begin{enumerate}
\item Left proper, if for every pushout square of the form
\[
\xymatrix{A\ar[d]^i\ar[r]^f & B\ar[d]^j\\
C\ar[r] & D}
\]
such that $i$ is a weak equivalence, the map $j$ is also a weak equivalence.
\item Right proper, if for every pullback square of the form
\[
\xymatrix{C\ar[d]^j\ar[r] & D\ar[d]^i\\
A\ar[r]^f & B}
\]
such that $i$ is a weak equivalence, the map $j$ is also a weak equivalence.
\end{enumerate}
We denote by $LP$ the class of left proper maps in $\cC$ and by $RP$ the class of right proper maps in $\cC$.
\end{define}

If $\cC$ is a category and $M,N$ are classes of morphisms in $\cC$, we will denote by $\Mor({\cC}) = {M}\circ {N}$ the assertion that every map $A\to B $ in ${\cC}$ can be factored as $A\xrightarrow{f} C\xrightarrow{g} B $,
where $f$ is in ${N}$ and $g$ is in ${M}$.
The following proposition is the main motivation for introducing the concepts of left and right proper morphisms:
\begin{prop}[{\cite[Proposition 3.7]{BaSc2}}]\label{p:compose}
Let $(\cC,\cW)$ be a relative category, and let $X\xrightarrow{f} Y\xrightarrow{g} Z $ be a pair of composable morphisms in $\Ind(\cC)$. Then:
\begin{enumerate}
\item If $\cC$ has finite limits and colimits, and $\Mor(\cC)=RP\circ LP$, then $f,g\in \Lw^{\cong}(\cW)$ implies that $g\circ f\in \Lw^{\cong}(\cW)$.
\item If $\cC$ has finite limits, and $\Mor(\cC)=RP\circ \cW$, then $g,g\circ f\in \Lw^{\cong}(\cW)$ implies that $f\in \Lw^{\cong}(\cW)$.
\item If $\cC$ has finite colimits, and $\Mor(\cC)=\cW\circ LP$, then $f,g\circ f\in \Lw^{\cong}(\cW)$ implies that $g\in \Lw^{\cong}(\cW)$.
\end{enumerate}
\end{prop}

\subsection{Left proper simplicial and monoidal weak cofibration categories}

In this subsection we recall the notions of a left proper, simplicial and monoidal quasi model categories and weak cofibration categories. If a small almost ind-admissible weak cofibration category possesses one of these notions, the induced quasi model structure on its ind-category, given by Theorem \ref{t:almost_model_dual}, possesses the corresponding notion. For more details the reader is referred to \cite{BaSc3}.

\begin{define}\label{d:r_proper}
Let $\cC$ be a quasi model category or a weak cofibration category. Then $\cC$ is called left proper if for every pushout square of the form
\[
\xymatrix{A\ar[d]^i\ar[r]^f & B\ar[d]^j\\
C\ar[r] & D}
\]
such that $f$ is a cofibration and $i$ is a weak equivalence, the map $j$ is a weak equivalence.
\end{define}

The following proposition is shown in \cite[Corollary 3.3]{BaSc3}, based on the proof of \cite[Theorem 4.15]{Isa}:

\begin{prop}\label{c:r_proper}
Let $\cC$ be a small left proper almost ind-admissible weak cofibration category.
Then with the quasi model structure defined in Theorem \ref{t:almost_model_dual}, $\Ind(\cC)$ is a
left proper quasi model category.
\end{prop}

We can define the notion of a Quillen adjunction between quasi model categories, in two different ways, just as in the case of model categories:

\begin{define}[{\cite[Corollary 5.4]{BaSc3}}]\label{d:Qfunc}
Let $\cC$,$\cD$ be quasi model categories, and let
$$F:\cC\adj\cD:G$$
be an adjunction. We say that this adjunction is a \emph{Quillen pair} if one of the following equivalent conditions is satisfied:
\begin{enumerate}
\item The functor $F$ preserves cofibrations and trivial cofibrations.
\item The functor $G$ preserves fibrations and trivial fibrations.
\end{enumerate}
In this case we say that $F$ is a \emph{left Quillen functor} and $G$ is a \emph{right Quillen functor}.
\end{define}

\begin{define}\label{d:prolong}
Let $\cB$,$\cC$,$\cD$ be categories and let $(-)\otimes (-):\cB\times \cC\to \cD$ be a bifunctor.
We have a naturally induced prolongation of $\otimes$ to a bifunctor (which we also denote by $\otimes$)
$$(-)\otimes (-):\Ind(\cB)\times \Ind(\cC)\to \Ind(\cD).$$
If $B=\{B_i\}_{i\in I}$ is an object in $\Ind(\cB)$ and $C=\{C_j\}_{j\in J}$ is an object in $\Ind(\cC)$, then  $B\otimes C$ is the object in $\Ind(\cD)$ given by the diagram
$$\{B_{i}\otimes C_{j}\}_{(i,j)\in I\times J}.$$
\end{define}

\begin{define}\label{d:monoidal}
Let $(\cM,\otimes,I)$ be a symmetric monoidal category which is also a quasi model category (resp. weak cofibration category). We say that $\cM$, with this structure, is a monoidal quasi model category (resp. monoidal weak cofibration category) if the following conditions are satisfied:
\begin{enumerate}
\item The functor $\otimes:\cM\times \cM\to \cM$ is a part of a two variable adjunction (resp. commutes with finite colimits in every variable separately).
\item For every cofibration $j:X\to Y$ in $\cM$ and every cofibration $i:L\to K$ in $\cM$ the induced map
$$X \otimes K \coprod_{X \otimes L}Y\otimes L\to Y\otimes K$$
is a cofibration (in $\cM$), which is acyclic if either $i$ or $j$ is.
\item $I$ is a cofibrant object in $\cM$.
\end{enumerate}
\end{define}

\begin{prop}[{\cite[Theorem 5.15]{BaSc3}}]\label{p:monoidal}
Let $(\cM,\ten,I)$ be a small almost ind-admissible monoidal weak cofibration category. Then with the quasi model structure described in Theorem \ref{t:almost_model_dual} and with the natural prolongation of $\ten$ (see Definition \ref{d:prolong}), $\Ind(\cM)$ is a monoidal quasi model category.
\end{prop}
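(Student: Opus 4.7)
The plan is to assemble the conclusion from the machinery already established in the paper. By Theorem \ref{t:almost_model_dual}, the smallness and almost ind-admissibility of $\cM$ produce an almost model structure on $\Ind(\cM)$ with cofibrations $\R(\coSp^{\cong}(\cC))$. Since $\cM$ is a monoidal weak cofibration category, the bifunctor $\otimes$ is in particular a weak left Quillen bifunctor, hence commutes with finite colimits in each variable separately, so $\cM$ is weakly closed in the sense of Definition \ref{d:WC_monoidal}. Proposition \ref{l:monoidal} then promotes the natural prolongation of $\otimes$ to a closed monoidal structure on $\Ind(\cM)$, with unit object the simple ind-object $I$.

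Next, I would verify the left Quillen bifunctor condition for the prolongation $\otimes:\Ind(\cM)\times\Ind(\cM)\to\Ind(\cM)$. This is exactly the content of Proposition \ref{p:RQFunc} applied with $\cB=\cC=\cD=\cM$: the hypothesis that the original $\otimes$ is a weak left Quillen bifunctor is built into the definition of a monoidal weak cofibration category, and the almost ind-admissibility of $\cM$ is assumed. No further work is required for this step.

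Finally, I would check that the unit $I$ is cofibrant in $\Ind(\cM)$. By hypothesis $I$ is cofibrant in $\cM$, i.e.\ the map $\phi\to I$ from the initial object of $\cM$ lies in $\cC$. The initial object of $\cM$ remains initial in $\Ind(\cM)$, because for any ind-object $\{C_j\}_{j\in J}$ one computes
$$\Hom_{\Ind(\cM)}(\phi,\{C_j\}_{j\in J})=\colim_{j\in J}\Hom_{\cM}(\phi,C_j)=\colim_{j\in J}\{*\}=\{*\}.$$
Viewing $\phi\to I$ as a natural transformation indexed by the trivial one-element poset, the dual of the cospecial condition is vacuously satisfied beyond membership in $\cC$, so this map lies in $\coSp^{\cong}(\cC)\subseteq\R(\coSp^{\cong}(\cC))=\mathbf{C}$. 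Hence $I$ is cofibrant in $\Ind(\cM)$, completing the proof.

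The argument is essentially an assembly of previously proven results, so there is no real obstacle; the genuine technical work — that the prolongation of a weak left Quillen bifunctor is a left Quillen bifunctor between the induced almost model categories — has already been absorbed into Proposition \ref{p:RQFunc}, and the remaining checks (monoidal structure on $\Ind(\cM)$ and cofibrancy of the unit) reduce to invoking Proposition \ref{l:monoidal} and identifying simple cofibrations of $\cM$ with (co)special $\cC$-maps over a singleton index.
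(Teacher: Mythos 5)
Your proof is correct and follows essentially the same route as the paper: invoke Proposition \ref{l:monoidal} for the closed monoidal structure, Proposition \ref{p:RQFunc} for the left Quillen bifunctor condition, and observe that the unit is cofibrant. The only difference is that you spell out why a cofibration of $\cM$ remains a cofibration in $\Ind(\cM)$, a point the paper asserts without comment; your justification via the cospecial condition over a singleton index is valid.
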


\begin{define}\label{d:simplicial}
Let $\cC$ be a quasi model category category which is also tensored over the category of simplicial sets $\cS$. We say that $\cC$, with this structure, is a simplicial quasi model category, if the following conditions are satisfied:
\begin{enumerate}
  \item The action $(-)\otimes (-):\cS\times \cC\to \cC$ is closed, that is, it is part of a two variable adjunction.
  \item For every cofibration $j:X\to Y$ in $\cS$ and every cofibration $i:L\to K$ in $\cC$ the induced map
$$X \otimes K \coprod_{X \otimes L}Y\otimes L\to Y\otimes K$$
is a cofibration (in $\cC$), which is acyclic if either $i$ or $j$ is.
\end{enumerate}
\end{define}

Let $\cS_f$ denote the category of \textbf{finite} simplicial sets, that is, $\cS_f$ is the full subcategory of $\cS$ spanned by simplicial sets having only a finite number of non-degenerate simplicies. There is a natural equivalence of categories $\Ind(\cS_f)\xrightarrow{\sim}\cS$, given by taking colimits (see \cite{AR}). We say that a map in $\cS_f$ is a cofibration or a weak equivalence, if it is so as a map in $\cS$.

\begin{define}\label{d:tensored}
Let $\cC$ be a weak cofibration category which is also tensored over $\cS_f$. We say that $\cC$, with this structure, is a simplicial weak cofibration category if the following conditions are satisfied:
\begin{enumerate}
  \item The action $(-)\otimes (-):\cS_f\times \cC\to \cC$ is \emph{weakly closed}, that is, it commutes with finite colimits in every variable separately.
\item For every cofibration $j:X\to Y$ in $\cS_f$ and every cofibration $i:L\to K$ in $\cC$ the induced map
$$X \otimes K \coprod_{X \otimes L}Y\otimes L\to Y\otimes K$$
is a cofibration (in $\cC$), which is acyclic if either $i$ or $j$ is.
\end{enumerate}
\end{define}

\begin{prop}[{\cite[Theorem 5.23]{BaSc3}}]\label{p:simplicial}
Let $\cC$ be a small almost ind-admissible simplicial weak cofibration category. Then with the quasi model structure described in Theorem \ref{t:almost_model_dual}  and with the natural prolongation of the action (see Definition \ref{d:prolong}), $\Ind(\cC)$ is a simplicial quasi model category.
\end{prop}

\section{Quasi model categories}\label{s:left quasi model}

In this section we develop some theory for quasi model categories. We begin by generalizing some theorems from model category theory to quasi model categories. These results will be used in the next section to show the ind-admissibility of certain weak cofibration categories. All the proofs given in the beginning of this section are straightforward generalizations of proofs appearing in \cite{Hov}.

We first prove the following generalization of Ken Brown’s Lemma:
\begin{lem}\
Let $\cD$ be a quasi model category and let $\cC$ be a relative category. Suppose $G:\cD\to\cC$ is a functor which takes trivial fibrations between fibrant objects to weak equivalences. Then $G$ takes all weak equivalences between fibrant objects to weak equivalences.
\end{lem}

\begin{proof}
Let $f:A\to B$ be a weak equivalence between fibrant objects in $\cC$. We factor the map $(id_A,f):A\to A\times B$ in the quasi model category $\cC$ into a trivial cofibration followed by a fibration $A\xrightarrow{q} C\xrightarrow{p} A\times B$. From the following pullback diagram:
$$\xymatrix{A\times B\ar[r]\ar[d] & A\ar[d]\\
             B\ar[r] & \ast}$$
it follows that the natural maps $\pi_0:A\times B\to A$ and $\pi_1:A\times B\to B$ are fibrations. We have
$$(\pi_1\circ p)\circ q=\pi_1\circ(p\circ q)=\pi_1\circ (id_A,f)=f.$$
Since $f,q$ are weak equivalences and $\cC$ is a quasi model category, it follows that $\pi_1\circ p$ is also a weak equivalence in $\cC$. Similarly one shows that $\pi_0\circ p$ is a weak equivalence in $\cC$. Thus $\pi_0\circ p$ and $\pi_1\circ p$ are trivial fibrations between fibrant objects. It follows that $G(\pi_0\circ p)$ and $G(\pi_1\circ p)$ are weak equivalences in $\cD$. Since
$$G(\pi_0\circ p)\circ G(q)=G(\pi_0\circ p\circ q)=G(\pi_0\circ (id_A,f))=G(id_A)=id_{G(A)}$$
is also a weak equivalence in $\cD$, and $\cD$ is a relative category, it follows that $G(q)$ is a weak equivalence in $\cD$. Since $\cD$ is a quasi model category the weak equivalences in $\cD$ are closed under composition, so we have that
$$G(f)=G((\pi_1\circ p)\circ q)=G(\pi_1\circ p)\circ G(q)$$
is a weak equivalence in $\cD$.
\end{proof}

\begin{cor}\label{c:Brown}
Consider a Quillen pair:
$$F:\cC\adj \cD:G,$$
where $\cC$ is a model category and $\cD$ is a quasi model category. Then $G$ takes weak equivalences between fibrant objects to weak equivalences.
\end{cor}

\begin{proof}
This follows from the previous lemma and Definition \ref{d:Qfunc}.
\end{proof}

We now come to our main result of this section.
\begin{prop}\label{p:simplicial_right}
Let $\cC$ be a simplicial quasi model category (see Definition \ref{d:simplicial}). Then a map in $\cC$, between fibrant cofibrant objects, is a weak equivalence iff it is a simplicial homotopy equivalence.
\end{prop}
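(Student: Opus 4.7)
My plan is to combine a standard Whitehead-type argument (for the direction ``weak equivalence implies simplicial homotopy equivalence'') with a simplicial mapping space argument (for the converse), using only the half of the two-out-of-three property that is available in a right model category.

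I would begin by factoring $f$ as $A \xrightarrow{i} C \xrightarrow{p} B$ with $i$ a trivial cofibration and $p$ a fibration. The object $C$ is both fibrant (since $C \to B \to *$ composes two fibrations) and cofibrant (since $\emptyset \to A \to C$ composes two cofibrations), so all three of $A, C, B$ are fibrant-cofibrant, which enables the standard lifting tricks.

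For the direction ``weak equivalence $\Rightarrow$ simplicial homotopy equivalence'', the right model category two-out-of-three property applied to $i$ and $pi=f$ (both in $\cW$) forces $p\in\cW$, so $p$ is an acyclic fibration. I would then show $i$ and $p$ are simplicial homotopy equivalences separately and compose. For $i$, lift $\id_A$ along $i$ using that $A$ is fibrant, obtaining a retraction $r:C\to A$ with $ri=\id_A$; then solve the lifting problem for $i$ against $(\mathrm{ev}_0,\mathrm{ev}_1):C^{\Delta^1}\to C\times C$ (a fibration by condition (2) of Proposition \ref{p:Qbifunc} applied to $\partial\Delta^1\hookrightarrow\Delta^1$ and $C\to *$, using $C$ fibrant) with top map $\mathrm{diag}\circ i$ and bottom $(ir,\id_C)$; the adjoint of the lift is a simplicial homotopy $ir\simeq\id_C$. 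Dually for $p$, lift $\id_B$ along the acyclic fibration $p$ using $B$ cofibrant, obtaining a section $s$ with $ps=\id_B$; then solve the lifting problem for the cofibration $\partial\Delta^1\otimes C\to\Delta^1\otimes C$ (a cofibration by the pushout-product axiom, using $C$ cofibrant) against $p$ with top map $(sp,\id_C)$ and bottom the constant homotopy $p\pi$, producing a simplicial homotopy $sp\simeq\id_C$.

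For the converse, first observe that $i$ is a weak equivalence between fibrant-cofibrant objects, so by the direction just proved it is a simplicial homotopy equivalence; since simplicial homotopy equivalences are isomorphisms in the simplicial homotopy category, they satisfy two-out-of-three, and hence $p$ is also a simplicial homotopy equivalence. It then suffices to show $p$ is a weak equivalence (then $f=pi$ is a composition of weak equivalences). My plan is to show $p$ is an acyclic fibration, which by Lemma \ref{l:lifting} reduces to the right lifting property against every cofibration $u:L\to K$ in $\cC$. Condition (3) of Proposition \ref{p:Qbifunc} makes the pushout-product map
$$\Map(K,C)\longrightarrow \Map(L,C)\times_{\Map(L,B)}\Map(K,B)$$
a fibration of simplicial sets. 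Because $\Map(X,-)$ is a simplicially enriched functor for every $X$, it preserves simplicial homotopy equivalences, so $\Map(K,p)$ and $\Map(L,p)$ are weak equivalences in $\cS$; right properness of $\cS$ makes the pullback projection $\Map(L,C)\times_{\Map(L,B)}\Map(K,B)\to\Map(K,B)$ a weak equivalence, and two-out-of-three in $\cS$ then forces the displayed pushout-product map to be a weak equivalence as well, hence an acyclic fibration, hence surjective on $0$-simplices, providing the required lift.

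The hard part will be the converse direction: since a right model category lacks the ``$k,kh\in\cW\Rightarrow h\in\cW$'' form of two-out-of-three, one cannot simply construct a section of $p$ and cancel. The simplicial enrichment has to be used essentially to transport the problem into $\cS$ (a full, proper model category), where both right properness and the complete two-out-of-three property are available.
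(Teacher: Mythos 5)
Your argument is correct, but it takes a genuinely different route from the paper's in both directions. For the forward direction the paper does not factor $f$ at all: it applies the generalized Ken Brown lemma (Corollary \ref{c:Brown}) to the Quillen adjunctions $(-)\otimes X \dashv \Map(X,-)$ to see that $\Map(X,f)$ is a weak equivalence of simplicial sets for every cofibrant $X$, and then runs the $\pi_0$ argument with $X=B$ and $X=A$ to produce a homotopy inverse directly; your factor-and-lift argument is the classical Quillen/Hovey proof transplanted to the almost-model setting, and it works because all the liftings you invoke are covered by Lemma \ref{l:lifting} and the step ``$i, pi\in\cW\Rightarrow p\in\cW$'' is exactly the half of two-out-of-three that a right model category retains. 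For the converse the divergence is more substantial: the paper also constructs a section $q$ of $p$ by lifting a homotopy, but then stays entirely inside $\cC$ --- it shows $qp$ is homotopic to $\id_{C}$ via a single homotopy $K:\Delta^1\otimes C\to C$, uses the right-model two-out-of-three to conclude $K$ (hence $qp$) is a weak equivalence, and finishes by exhibiting $p$ as a retract of $qp$; whereas you transport the whole problem into $\cS$ and prove the stronger statement that $p$ is an acyclic fibration by showing each pullback-hom map $\Map(K,C)\to\Map(L,C)\times_{\Map(L,B)}\Map(K,B)$ is an acyclic Kan fibration. Your route is arguably cleaner (no retract trick, and it yields the lifting property directly) but imports right properness and full two-out-of-three for $\cS$, while the paper's proof is self-contained in the axioms already developed. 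One point you should make explicit: in the right-properness step, $\Map(L,p)$ need not be a fibration since $L$ is an arbitrary (not necessarily cofibrant) object, so you must pull the weak equivalence $\Map(L,p)$ back along the restriction map $\Map(K,B)\to\Map(L,B)$, which is a fibration by condition (3) of Proposition \ref{p:Qbifunc} applied to $u:L\to K$ and $B\to \ast$ using that $B$ is fibrant; similarly, the fact that $\Map(L,p)$ and $\Map(K,p)$ are weak equivalences should be justified by noting that the homotopies witnessing the simplicial homotopy inverse of $p$ can be taken to be single edges (the relevant mapping spaces being Kan, as all of $A,C,B$ are fibrant--cofibrant), so that enriched functoriality produces honest edge homotopies in $\cS$.
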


\begin{proof}
Let $f:A\to B$ be a weak equivalence between fibrant cofibrant objects. The left action of $\cS$ on $\cC$ is a bifunctor $\otimes:\cS\times\cC\to\cC$ which is part of a two variable adjunction, denoted $(\otimes,\Map,\hom)$. It follows that for every cofibrant object $X$ of $\cC$, we have a Quillen adjunction (see Definition \ref{d:Qfunc})
$$(-)\otimes X:\cS\to \cC:\Map(X,-).$$
Since $A$ and $B$ are fibrant, it follows from Corollary \ref{c:Brown} that $\Map(X,f):\Map(X,A)\to\Map(X,B)$ is a weak equivalence in $\cS$. In particular it induces isomorphism on connected components
$f_*:\pi_0\Map(X,A)\to\pi_0\Map(X,B)$.

Taking $X=B$, we find a map $g:B\to A$ in $\cC$ such that $f\circ g\sim id_B$. It follows that $f\circ g\circ f\sim f$, so taking $X=A$, we find that $g\circ f\sim id_A$. Thus $f$ is a simplicial homotopy equivalence.

Let $f:A\to B$ be a simplicial homotopy equivalence between fibrant cofibrant objects. We factor $f$, in the quasi model category $\cC$, into a trivial cofibration followed by a fibration $A\xrightarrow{g}C\xrightarrow{p}B$. Since the weak equivalences in $\cD$ are closed under composition, it is enough to show that $p$ is a weak equivalence.

Let $f':B\to A$ be a simplicial homotopy inverse to $f$. Since $f\circ f'\sim id_B$ and $\Map(B,B)$ is a Kan-complex, we get that there exists a homotopy $H:\Delta^1\otimes B\to B$ such that $H|_{\Delta^{\{1\}}\otimes B}=id_B$ and $H|_{\Delta^{\{0\}}\otimes B}=f\circ f'$.

The map $\Delta^{\{0\}}\to \Delta^1$ is an acyclic cofibration in $\cS$, and the map $\phi\to B$ is a cofibration in $\cC$. Since  $\cC$ is a simplicial quasi model category, it follows that the map $\Delta^{\{0\}}\otimes B\to \Delta^1\otimes B$ is an acyclic cofibration in $\cC$.
Thus the following diagram:
$$\xymatrix{\Delta^{\{0\}}\otimes B\ar[r]^{g\circ f'}\ar[d]^{i_1} & C\ar[d]^p\\
           \Delta^1\otimes B \ar[r]^H & B,}$$
has a lift $H':\Delta^1\otimes B\to C$. We define $q:B\to C$ to be the composition
$$\xymatrix{B\cong \Delta^{\{1\}}\otimes B\ar[r] & \Delta^1\otimes B \ar[r]^{H'} & C.}$$
Then $p\circ q=id_B$, and $H'$ is a simplicial homotopy between $g\circ f'$ and $q$.

The object $C$ is also fibrant cofibrant and $g$ is a weak equivalence. As we have shown in the beginning of the proof, it follows that $g$ is a homotopy equivalence. Let $g':C\to A$ be a simplicial homotopy inverse to $g$. Then we have
$$p\sim p\circ g\circ g'\sim f\circ g',$$
so it follows that
$$q\circ p\sim (g\circ f')\circ(f\circ g')\sim id_C.$$
Since $\Map(C,C)$ is a Kan-complex, we get that there exists a homotopy $K:\Delta^1\otimes C\to C$ such that $K|_{\Delta^{\{0\}}\otimes C}=id_C$ and $K|_{\Delta^{\{1\}}\otimes C}=q\circ p$.

The maps $\Delta^{\{0\}}\to \Delta^1$ and $\Delta^{\{1\}}\to \Delta^1$ are acyclic cofibrations in $\cS$, and the map $\phi\to C$ is a cofibration in $\cC$. Since  $\otimes:\cS\times\cC\to\cC$ is a left Quillen bifunctor, it follows that the maps $\Delta^{\{0\}}\otimes C\to \Delta^1\otimes C$ and $\Delta^{\{1\}}\otimes C\to \Delta^1\otimes C$ are acyclic cofibrations in $\cC$. The map
$$\Delta^{\{0\}}\otimes C\to \Delta^1\otimes C\xrightarrow{K} C$$
is just $id_C$ so it is also a weak equivalence in $\cC$. Since $\cC$ is a quasi model category, it follows that $K$ is a weak equivalence in $\cC$. Since $\cC$ is a quasi model category the weak equivalences in $\cC$ are closed under composition. Since $q\circ p$ is just the composition
$$\Delta^{\{1\}}\otimes C\to \Delta^1\otimes C\xrightarrow{K} C,$$
we get that $q\circ p$ is a weak equivalence in $\cC$. The following diagram:
$$\xymatrix{C\ar[r]^=\ar[d]^p & C\ar[r]^=\ar[d]^{q\circ p} & C\ar[d]^p \\
             B\ar[r]^q &C \ar[r]^p & B}$$
shows that $p$ is a retract of $q\circ p$, which finishes our proof.
\end{proof}

\begin{define}
Let $\cC$ be a quasi model category.
\begin{enumerate}
\item A fibrant replacement functor in $\cC$ is an endofunctor $R:\cC\to\cC$ with image in the fibrant objects of $\cC$, together with a natural transformation $id_{\cC}\to R$ which is a level-wise weak equivalence.
\item A cofibrant replacement functor in $\cC$ is an endofunctor $Q:\cC\to\cC$ with image in the cofibrant objects of $\cC$, together with a natural transformation $Q\to id_{\cC}$ which is a level-wise weak equivalence.
\end{enumerate}
\end{define}

\begin{lem}\label{l:right_preserve}
Let $\cC$ be a quasi model category.
Then the following hold:
\begin{enumerate}
\item Any fibrant replacement functor preserves weak equivalences. That is, if $R:\cC\to \cC$ is a fibrant replacement functor and $X\to Y$ is a weak equivalence in $\cC$, then $R(X)\to R(Y)$ is a weak equivalence.
\item Any cofibrant replacement functor reflects weak equivalences. That is, if $Q:\cC\to \cC$ is a cofibrant replacement functor and $X\to Y$ is a map in $\cC$ such that  $Q(X)\to Q(Y)$ is a weak equivalence, then $X\to Y$ is a weak equivalence.
\end{enumerate}
\end{lem}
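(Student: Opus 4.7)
The plan is to observe that both parts follow immediately from the naturality squares associated with the natural transformations $\eta\colon \id_{\cC}\to R$ and $\epsilon\colon Q\to \id_{\cC}$, combined with the two properties available in a right relative category: closure of $\cW$ under composition, and the ``second out of three'' property that $f,\,g\circ f\in\cW$ implies $g\in\cW$ (Definition \ref{d:left_right_rel}(1)). Notice that the full almost model structure is not actually used; the argument depends only on $(\cC,\cW)$ being a right relative category.

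For part (1), let $f\colon X\to Y$ be a weak equivalence in $\cC$. Naturality of $\eta$ gives a commutative square with
$$R(f)\circ \eta_X \;=\; \eta_Y\circ f.$$
Since $f,\eta_Y\in\cW$ and $\cW$ is closed under composition, the common composite lies in $\cW$; in particular $R(f)\circ \eta_X\in\cW$. Because $\eta_X\in\cW$ as well, the second-out-of-three property applied to the composable pair $(\eta_X,R(f))$ forces $R(f)\in\cW$.

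For part (2), suppose $Q(f)\in\cW$ for some map $f\colon X\to Y$. Naturality of $\epsilon$ yields
$$f\circ \epsilon_X \;=\; \epsilon_Y\circ Q(f).$$
Since $Q(f),\epsilon_Y\in\cW$, their composite lies in $\cW$, and hence so does $f\circ \epsilon_X$. Because $\epsilon_X\in\cW$, applying the second-out-of-three property to the pair $(\epsilon_X,f)$ gives $f\in\cW$.

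There is no real obstacle: both assertions are instances of the standard argument that a natural weak equivalence lets one transport weak equivalences across a naturality square. The only subtle point is to verify that the ``second out of three'' clause is indeed the right one to invoke (rather than the ``first out of three,'' which is what a left relative category would supply), and this matches exactly the hypothesis that $\cC$ is a right model category.
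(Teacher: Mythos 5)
Your proof is correct and is essentially the same as the paper's: both parts are obtained by chasing the naturality square of $\id_{\cC}\to R$ (resp.\ $Q\to\id_{\cC}$), using closure of $\cW$ under composition to put the diagonal in $\cW$ and then clause (1)(b) of Definition \ref{d:left_right_rel} to conclude. Your observation that only the right relative category structure is needed is also accurate.
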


\begin{proof}
\begin{enumerate}
\item
Let $X\to Y$ be a weak equivalence in $\cC$. By considering the following diagram
$$\xymatrix{X \ar[dr]\ar[r]^{\sim}\ar[d]_{\sim} & Y\ar[d]^{\sim}\\
            R(X) \ar[r] & R(Y)}$$
and the definition of a quasi model category, it follows that $R(X)\to R(Y)$ is a weak equivalence.
\item
Let $X\to Y$ be a  map in $\cC$ such that  $Q(X)\to Q(Y)$ is a weak equivalence. By considering the following diagram
$$\xymatrix{Q(X) \ar[dr]\ar[r]^{\sim}\ar[d]_{\sim} & Q(Y)\ar[d]^{\sim}\\
            X \ar[r] & Y}$$
and the definition of a quasi model category, it follows that $X\to Y$ is a weak equivalence.

\end{enumerate}
\end{proof}
Proposition \ref{p:simplicial_right} has the following interesting corollary:
\begin{prop}\label{p:iff_model}
Let $\cC$ be a simplicial quasi model category. Then $\cC$ is a model category iff there exists a fibrant replacement functor that reflects weak equivalences and a cofibrant replacement functor that preserves weak equivalences such that either one of the following holds:
\begin{enumerate}
\item This fibrant replacement functor preserves cofibrant objects.
\item This cofibrant replacement functor preserves fibrant objects.
\end{enumerate}
\end{prop}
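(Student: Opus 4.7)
The content of the proposition lies in the nontrivial implication: assuming the existence of $R, Q$ as stated, show that the remaining portion of the two out of three property holds, namely: if $g$ and $g\circ f$ are weak equivalences, then so is $f$. The converse direction is routine: in an actual model category the standard functorial factorizations produce a fibrant replacement $R$ (factoring $X\to *$ as an acyclic cofibration followed by a fibration) that preserves cofibrant objects, and dually a cofibrant replacement $Q$ that preserves fibrant objects; both functors trivially preserve and reflect weak equivalences. So the plan focuses on the forward direction.

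The strategy is to reduce everything to a map between fibrant cofibrant objects, where Proposition \ref{p:simplicial_right} converts weak equivalences to simplicial homotopy equivalences, and then exploit the fact that simplicial homotopy equivalences satisfy the full two out of three property. Suppose condition (1) holds, and let $f:X\to Y$, $g:Y\to Z$ with $g, g\circ f$ weak equivalences. Apply $Q$ first; by Lemma \ref{l:right_preserve}(2) and Definition \ref{d:left_right_rel}, $Q$ reflects weak equivalences in any right model category, and by hypothesis it also preserves them. Thus $Q(g)$ and $Q(g\circ f)=Q(g)\circ Q(f)$ are weak equivalences, and all three objects $QX, QY, QZ$ are cofibrant. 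Now apply $R$: by Lemma \ref{l:right_preserve}(1) it preserves weak equivalences, and by hypothesis (1) it preserves cofibrant objects; since $R$ produces fibrant objects, $RQX, RQY, RQZ$ are fibrant cofibrant. Therefore $RQ(g)$ and $RQ(g)\circ RQ(f)$ are weak equivalences between fibrant cofibrant objects.

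By Proposition \ref{p:simplicial_right}, these two maps are simplicial homotopy equivalences. Since homotopy equivalences in any category enriched over a Cartesian category satisfy the full two out of three property (given homotopy inverses $g'$ to $g$ and $k$ to $g\circ f$, one verifies directly that $k\circ g$ is a two-sided homotopy inverse to $f$), we conclude that $RQ(f)$ is a simplicial homotopy equivalence, hence (again by Proposition \ref{p:simplicial_right}) a weak equivalence. Finally, the reflection assumptions unwind the replacements: $R$ reflects weak equivalences by hypothesis, so $Q(f)$ is a weak equivalence; and $Q$ reflects weak equivalences by Lemma \ref{l:right_preserve}(2), so $f$ is a weak equivalence. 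The case under condition (2) proceeds dually, applying $R$ first and then $Q$, using that $Q$ preserves fibrant objects and $R$ already outputs fibrant ones, so $QR(X)$ is again fibrant cofibrant.

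The one point requiring care is verifying that $RQ$ (resp.\ $QR$) really lands in fibrant cofibrant objects: the fibrancy of $RQ(X)$ is automatic from $R$ being a fibrant replacement, while cofibrancy requires hypothesis (1). No substantial technical obstacle is anticipated, as the 2-out-of-3 property for simplicial homotopy equivalences is formal and all other ingredients (Lemma \ref{l:right_preserve}, Proposition \ref{p:simplicial_right}) are in place.
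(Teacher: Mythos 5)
Your proposal is correct and follows essentially the same route as the paper: both reduce to the composite replacement $RQ$ (or $QR$ in case (2)), use Lemma \ref{l:right_preserve} together with the stated preservation/reflection hypotheses to transport weak equivalences back and forth, and invoke Proposition \ref{p:simplicial_right} to identify weak equivalences between fibrant cofibrant objects with simplicial homotopy equivalences, which satisfy two out of three. The only cosmetic difference is that the paper packages this as the characterization ``$f$ is a weak equivalence iff $RQ(f)$ is a simplicial homotopy equivalence,'' whereas you verify the one missing third of the two out of three property directly.
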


\begin{proof}
Clearly, if $\cC$ is an actual model category, then there exist such cofibrant and fibrant replacement functors in $\cC$. Simply choose the functors obtained by any functorial factorization in the model category $\cC$.

Conversely, let $R:\cC\to \cC$ be a fibrant replacement functor that reflects weak equivalences and let $Q:\cC\to \cC$ be a cofibrant replacement functor that preserves weak equivalences. With out loss of generality, we assume that $R$ preserves cofibrant objects. We wish to show that the weak equivalences in $\cC$ are precisely the maps $X\to Y$ in $\cC$ such that $R(Q(X))\to R(Q(Y))$ is a simplicial homotopy equivalence in $\cC$. This will show that the weak equivalences in $\cC$ satisfy the two out of three property, and thus $\cC$ is a model category.

So let $X\to Y$ be a weak equivalence in $\cC$. By our assumption on $Q$, we have that $Q(X)\to Q(Y)$ is a weak equivalence in $\cC$. By Lemma \ref{l:right_preserve}, we have that $R(Q(X))\to R(Q(Y))$ is a weak equivalence in $\cC$. Since $R$ preserves cofibrant objects, we see that $R(Q(X))$ and $R(Q(Y))$ are fibrant cofibrant. It thus follows from Proposition \ref{p:simplicial_right} that $R(Q(X))\to R(Q(Y))$ is a simplicial homotopy equivalence in $\cC$.

Conversely, let $X\to Y$ be a map in $\cC$ such that $R(Q(X))\to R(Q(Y))$ is a simplicial homotopy equivalence in $\cC$. Since $R(Q(X))$ and $R(Q(Y))$ are fibrant cofibrant objects, we have by Proposition \ref{p:simplicial_right} that $R(Q(X))\to R(Q(Y))$ is a weak equivalence in $\cC$. By our assumption on $R$, we have that $Q(X)\to Q(Y)$ is a weak equivalence in $\cC$. By Lemma \ref{l:right_preserve}, we have that $X\to Y$ is a weak equivalence in $\cC$.
\end{proof}

\section{Criteria for the two out of three property}\label{s:Criteria}

In this section we prove our main results of this paper, namely, we prove theorems giving sufficient conditions on a weak cofibration category that insure its ind-admissibility. In the first subsection we prove two preliminary criteria for ind-admissibility (Theorems \ref{t:admiss_iff} and \ref{t:main}), and in the second we prove our main result (Theorem \ref{t:main2}).

For the sake of clarity, let us explain briefly the role played by quasi model categories and the theory developed in the previous section in the proof of our main result. We begin with a weak cofibration category $\cM$ that satisfies certain hypothesis and we wish to show that it is ind-admissible. We cannot use Proposition \ref{p:compose} for this because we don't know how to obtain factorizations of the form $\Mor(\cM)=RP\circ \cW$. However, we can easily obtain factorizations of the form $\Mor(\cM)=RP\circ LP$ and $\Mor(\cM)=\cW\circ LP$. These can be obtained by a certain mapping cylinder factorization, as will be explained below. Thus, using Proposition \ref{p:compose}, we can deduce that $\cM$ is almost ind-admissible, and there is an induced \textbf{quasi} model structure on $\Ind(\cM)$. Knowing this, we can use the previously developed theory for quasi model categories on $\Ind(\cM)$. Doing so, together with other lines of proof that depend on the specific properties of the given weak cofibration category, we are able to show that $\cM$ is not only almost ind-admissible but also ind-admissible.

\subsection{Preliminary criteria}
\begin{thm}\label{t:admiss_iff}
Let $(\cM,\cW,\cC)$ be a small almost ind-admissible simplicial weak cofibration category. Suppose that $\mcal{M}$ has functorial factorizations into a cofibration followed by a weak equivalence. Then $\cM$ is ind-admissible iff there exists a fibrant replacement functor in the quasi model category $\Ind(\cM)$ given by Theorem \ref{t:almost_model_dual}, that reflects weak equivalences and preserves cofibrant objects.
\end{thm}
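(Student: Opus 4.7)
The plan is to reduce the theorem to Proposition \ref{p:iff_model}, which characterizes, within simplicial right model categories, the existence of an actual model category structure in terms of fibrant and cofibrant replacement functors with compatibility properties. First I would observe that since $\cM$ is almost ind-admissible, Theorem \ref{t:almost_model_dual} endows $\Ind(\cM)$ with an almost model structure, and by Definition \ref{d:almost_admiss_dual} the class $\mathbf{W}=\Lw^{\cong}(\cW)$ satisfies exactly the two portions of the two-out-of-three property that make $\Ind(\cM)$ a right model category in the sense of Section \ref{s:left right model}. Proposition \ref{p:simplicial} then upgrades this to a simplicial right model category. The content of ind-admissibility is precisely that $\mathbf{W}$ satisfies the full two-out-of-three property, i.e.\ that $\Ind(\cM)$ is a genuine model category. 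Thus the theorem reduces to comparing the given hypothesis with the criterion of Proposition \ref{p:iff_model}.

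For the forward implication, assume $\cM$ is ind-admissible, so $\Ind(\cM)$ is a model category by Theorem \ref{t:model_dual}. Define $R$ via the functorial factorization of $X \to \ast$ as an acyclic cofibration $X \to R(X)$ followed by a fibration. This $R$ reflects weak equivalences by applying two-out-of-three to the naturality square of $\id \to R$, and $R$ preserves cofibrant objects because cofibrations are closed under composition (so $\emptyset \to X \to R(X)$ is a cofibration whenever $\emptyset \to X$ is).

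For the reverse implication, I would exhibit a cofibrant replacement functor on $\Ind(\cM)$ that preserves weak equivalences, so that combined with the hypothesized $R$ (which preserves cofibrant objects and reflects weak equivalences), condition (1) of Proposition \ref{p:iff_model} will be satisfied and we may conclude that $\Ind(\cM)$ is a genuine model category. Using the functorial factorization hypothesis, factor each $\emptyset \to X$ in $\cM$ as a cofibration $\emptyset \to Q(X)$ followed by a weak equivalence $Q(X) \to X$. Then $Q\colon \cM \to \cM$ preserves weak equivalences on $\cM$: for $f\in\cW$, the relation $(Q(Y)\to Y)\circ Q(f) = f\circ(Q(X)\to X)$ combined with the full two-out-of-three property of $\cW$ in $\cM$ forces $Q(f)\in\cW$. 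Prolonging $Q$ levelwise yields a functor on $\Ind(\cM)$ together with a natural transformation $Q\to\id$ that is a levelwise weak equivalence, hence in $\Lw^{\cong}(\cW)$; each $Q(\{X_i\})=\{Q(X_i)\}$ is cofibrant in $\Ind(\cM)$ since levelwise cofibrations with cofibrant source belong to the class of cofibrations generated by $\cC$; and the prolongation preserves $\Lw^{\cong}(\cW)$ because any map in $\Lw^{\cong}(\cW)$ is isomorphic to a levelwise $\cW$-map, on which the claim is immediate.

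The main subtlety will be the careful verification that levelwise application of $Q$ gives a well-defined functor on $\Ind(\cM)$ compatible with reindexing of morphisms, and that its image does land in the cofibrant objects of $\Ind(\cM)$; this is a book-keeping argument using the standard cofinality techniques recalled in Section \ref{s:prelim} (in particular Lemma \ref{l:cofinal}) together with the universal property of $\Ind(\cM)$ as the filtered-colimit completion of $\cM$. Once these technical points are settled, Proposition \ref{p:iff_model} delivers the equivalence.
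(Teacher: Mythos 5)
Your reduction to Proposition \ref{p:iff_model} is exactly the paper's strategy, and your treatment of the forward implication is fine. The problem is in the reverse implication, at the single point you dismiss as book-keeping: the claim that the levelwise prolongation $Q(\{X_i\}_{i\in I})=\{Q(X_i)\}_{i\in I}$ lands in the cofibrant objects of $\Ind(\cM)$. The cofibrations of the almost model structure of Theorem \ref{t:almost_model_dual} are $\mathbf{C}=\R(\coSp^{\cong}(\cC))$, not the levelwise $\cC$-maps; the only inclusion available in the paper goes the other way, $\coSp^{\cong}(\cC)\subseteq\Lw^{\cong}(\cC)$ (the dual of Proposition \ref{p:forF_sp_is_lw}). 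Concretely, after reindexing by a cofinite poset, cofibrancy of $\phi\to\{Q(X_i)\}$ in $\Ind(\cM)$ amounts (up to retract) to a Reedy-type condition: the latching maps $\colim_{s<t}Q(X_s)\to Q(X_t)$ must be cofibrations in $\cM$. Knowing only that each $Q(X_t)$ is cofibrant in $\cM$ gives no control over these maps, since the transition maps $Q(X_s)\to Q(X_t)$ of your diagram are arbitrary morphisms of $\cM$. So ``levelwise cofibrations with cofibrant source belong to the class of cofibrations generated by $\cC$'' is not a result in the paper and is not true in this generality; your $Q$ is a levelwise weak equivalence onto the identity, but it is not a cofibrant replacement functor in the sense required by Proposition \ref{p:iff_model}, whose proof genuinely uses that $R(Q(X))$ is cofibrant.

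The paper avoids this by not prolonging the factorization levelwise: it invokes \cite{BaSc} Theorem 5.7 to promote the functorial factorization in $\cM$ to a functorial factorization in $\Ind(\cM)$ into a map in $\coSp^{\cong}(\cC)$ followed by a map in $\Lw^{\cong}(\cW)$ (the ``Reedy construction''), and defines $Q$ by factoring $\phi\to X$ there. This makes cofibrancy of $Q(X)$ automatic, but shifts the work to the step you got for free: one must then check that this Reedy-type $Q$ still preserves $\Lw^{\cong}(\cW)$, which the paper does by reindexing a levelwise weak equivalence over a cofinite poset (Lemma \ref{l:cofinal_CDS}) and comparing the two Reedy constructions levelwise. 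To repair your argument you would either need to substitute this construction for your levelwise one, or supply a separate proof that in your situation levelwise cofibrant ind-objects are cofibrant --- which is not available from the results quoted.
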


\begin{proof}
By Theorem \ref{t:almost_model_dual}, there exists an induced quasi model category structure on $\Ind(\cM)$. By Proposition \ref{p:simplicial} this quasi model category is also simplicial. Using Proposition \ref{p:iff_model}, we see that in order to prove our theorem it is enough to show that there exists a cofibrant replacement functor on $\Ind(\cM)$ that preserves weak equivalences.

By \cite{BaSc} Theorem 5.7, we can use the functorial factorization in $\cM$ into a cofibration followed by a weak equivalence to produce a functorial factorization in $\Ind(\cM)$ into a morphism in $\coSp^{\cong}(\cC)$ followed by a morphism in $\Lw^{\cong}(\cW)$.
For any object $X$ in $\Ind(\cM)$, we can factor the unique map $\phi\to X$ using this functorial factorization and obtain a cofibrant replacement functor $Q:\Ind(\cM)\to \Ind(\cM).$
It remains to show that $Q$ preserves the weak equivalences in $\Ind(\cM)$, that is, that $Q$ preserves maps in $\Lw^{\cong}(\cW)$. So let $f:X\to Y$ be a map in $\Lw^{\cong}(\cW)$. Then $f$ is isomorphic to a natural transformation $f':X'\to Y'$ that is a levelwise $\cW$ map, on a (small) cofiltered category $T$. By \cite[Proposition 8.1.6]{SGA4-I}, we can choose a cofinite directed set $J$ and a cofinal functor $p:J\to T$. Then $p^*f':p^*X'\to p^*Y'$ is a natural transformation that is a levelwise $\cW$ map, on the cofinite cofiltered set $J$. By the construction of the functorial factorization given in \cite{BaSc} Theorem 5.7, we see that applying the functor $Q$ to $f:X\to Y$ is isomorphic to the Reedy construction on $p^*X'\to p^*Y'$ (see \cite{BaSc} Definition 4.3). We have a diagram in $\cM^J$
$$\xymatrix{(p^*X')_{Reedy}\ar[r]^{\Lw(\mcal{W})}\ar[d] & p^*X' \ar[d]^{\Lw(\mcal{W})}\\
 (p^*Y')_{Reedy}\ar[r]^{\Lw(\mcal{W})} & p^*Y'.}$$
It follows that the map between the Reedy constructions $(p^*X)_{Reedy}\to (p^*Y)_{Reedy}$ is in $\Lw(\cW)$, and thus, that the map $Q(X)\to Q(Y)$ is in $\Lw^{\cong}(\cW)$.
\end{proof}

Let $(\cM,\cW,\cC)$ be a small almost ind-admissible simplicial weak cofibration category with functorial factorizations into a cofibration followed by a weak equivalence. By Theorem \ref{t:admiss_iff} in order to show that $\cM$ is ind-admissible it is enough to show that there exists a fibrant replacement functor in the quasi model category $\Ind(\cM)$ that reflects weak equivalences and preserves cofibrant objects. We wish to formulate sufficient conditions for such a fibrant replacement functor to exist. In this paper we only do this assuming a rather strong assumption, namely, that \emph{every} object in $\Ind(\cM)$ is already fibrant. In this case we can clearly choose $\id_{\Ind(\cM)}$ as our fibrant replacement functor.

\begin{lem}\label{l:everything_fib}
Let $\cM$ be a small almost ind-admissible weak cofibration category, and consider the quasi model structure induced on $\Ind(\cM)$ by Theorem \ref{t:almost_model_dual}. Then every object in $\Ind(\cM)$ is fibrant iff every acyclic cofibration in $\cM$ admits a left inverse (that is, a retracting map).
\end{lem}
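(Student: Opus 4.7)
The plan is to unpack the definition of fibrancy in the model structure of Theorem \ref{t:almost_model_dual} and observe that both directions reduce to extending the identity map. Recall that in that structure the fibrations are $\mathbf{F}=(\cC\cap \cW)^{\perp}$, where $\cC\cap\cW$ consists of acyclic cofibrations in $\cM$ itself, viewed as morphisms in $\Ind(\cM)$ via the (fully faithful) inclusion of simple objects. Thus an object $X\in\Ind(\cM)$ is fibrant precisely when, for every acyclic cofibration $i:A\to B$ in $\cM$ and every morphism $f:A\to X$ in $\Ind(\cM)$, there exists a lift $\widetilde{f}:B\to X$ with $\widetilde{f}\circ i=f$.

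For the ``if'' direction I would assume every acyclic cofibration $i:A\to B$ in $\cM$ admits a retraction $r:B\to A$ (so $r\circ i=\id_A$). Then, given any $X\in\Ind(\cM)$ and any $f:A\to X$, the composite $f\circ r:B\to X$ provides the required lift, since $(f\circ r)\circ i=f\circ(r\circ i)=f$. Hence $X\to \ast$ has the right lifting property against every element of $\cC\cap\cW$, so $X$ is fibrant.

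For the ``only if'' direction I would use the fact that $\cM$ sits inside $\Ind(\cM)$ as a full subcategory (the simple pro-objects). Assuming every object of $\Ind(\cM)$ is fibrant, the object $A\in\cM\subseteq\Ind(\cM)$ is in particular fibrant. Applying the lifting property to the square with top arrow $\id_A:A\to A$ and left arrow $i:A\to B$ produces a morphism $r:B\to A$ in $\Ind(\cM)$ with $r\circ i=\id_A$. Because $A$ and $B$ are both simple objects, $r$ actually lies in $\Hom_{\cM}(B,A)=\Hom_{\Ind(\cM)}(B,A)$, so $r$ is a left inverse of $i$ in $\cM$.

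There is no real obstacle here: the argument is a direct translation of the lifting-property definition of fibrations, together with the elementary observation that full faithfulness of $\cM\hookrightarrow\Ind(\cM)$ lets us interpret the lift as a morphism in $\cM$. The only point worth stating carefully is that $\cC\cap\cW$ denotes acyclic cofibrations \emph{in $\cM$} (not $\Lw^{\cong}$ versions), which is exactly why the retraction statement is purely about $\cM$.
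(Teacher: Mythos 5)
Your proof is correct and follows essentially the same route as the paper's: the ``if'' direction composes the structure map with the retraction to produce the lift, and the ``only if'' direction lifts the identity of $A$ against $i$ using fibrancy of the simple object $A$ in $\Ind(\cM)$. The extra remark about full faithfulness of $\cM\hookrightarrow\Ind(\cM)$ guaranteeing that the lift is a genuine morphism of $\cM$ is a point the paper leaves implicit, and is a welcome clarification.
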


\begin{proof}
Suppose that every acyclic cofibration in $\cM$ admits a left inverse. Let $X$ be an object in $\Ind(\cM)$. We need to show that $X$ is fibrant. Let $i:A\to B$ be an acyclic cofibration in $\cM$. It is enough to show that every diagram of the form
$$\xymatrix{A\ar[d]^i\ar[r] & X\\
             B }$$
has a lift $h:B\to X$. Let $p:B\to A$ be a left inverse to $i$, that is, we have $p\circ i=id_A$. Then we can choose $h$ to be the composition
$B\xrightarrow{p}A\to X.$

Now suppose that every object in $\Ind(\cM)$ is fibrant. Let $i:A\to B$ be an acyclic cofibration in $\cM$. We need to show that $i$ admits a left inverse. Consider the diagram
$$\xymatrix{A\ar[d]^i\ar[r]^= & A.\\
             B }$$
Since the object $A$ is fibrant in $\Ind(\cM)$, we have a lift $p:B\to A$. Then $p$ is a left inverse to $i$.
\end{proof}

The last lemma leads to the following definition:
\begin{define}
An object $D$ in a weak cofibration category $\cM$ is called fibrant if for every acyclic cofibration $A\to B$ in $\cM$ and every diagram of the form
$$
\xymatrix{ A \ar[d]\ar[r] & D\\
 B & }
$$
there is a lift $B\to D$.
\end{define}

\begin{lem}\label{l:everything_fib_small}
Let $\cM$ be a weak cofibration category. Then every object in $\cM$ is fibrant iff every acyclic cofibration in $\cM$ admits a left inverse.
\end{lem}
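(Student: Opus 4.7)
The plan is to mimic verbatim the proof of Lemma \ref{l:everything_fib}, since the argument there used only elementary lifting properties and never invoked any feature specific to $\Ind(\cM)$. The definition of fibrancy in $\cM$ that has just been introduced is exactly the condition ``every acyclic cofibration $A \to B$ has the left lifting property with respect to the map $D \to *$,'' so both directions reduce to a one-line lifting/retraction argument.

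For the implication ($\Leftarrow$), I would assume every acyclic cofibration admits a left inverse, pick an arbitrary object $D \in \cM$, and check fibrancy directly. Given an acyclic cofibration $i : A \to B$ and a map $f : A \to D$, let $p : B \to A$ be a chosen left inverse to $i$, so $p \circ i = \id_A$. Then the composite $f \circ p : B \to D$ is the required lift, since $(f \circ p) \circ i = f \circ (p \circ i) = f$.

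For the implication ($\Rightarrow$), I would assume every object of $\cM$ is fibrant, pick an acyclic cofibration $i : A \to B$, and apply the fibrancy of $A$ itself to the square
$$
\xymatrix{ A \ar[d]^{i} \ar[r]^{=} & A \\ B & }
$$
The fibrancy of $A$ guarantees a lift $p : B \to A$, and by construction $p \circ i = \id_A$, so $p$ is a left inverse of $i$.

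The proof is entirely formal, and I do not expect any genuine obstacle; the only thing to watch for is that the definition of fibrancy in a weak cofibration category (as just stated in the excerpt) demands a lift against \emph{every} acyclic cofibration, which is precisely the hypothesis being compared with the existence of left inverses, so the two halves of the equivalence match with no extra work. In particular, no appeal to the machinery of $\Ind(\cM)$, functorial factorizations, or the almost model structure of Theorem \ref{t:almost_model_dual} is needed; this small lemma is really a categorical tautology used to translate between an internal and an external formulation of fibrancy.
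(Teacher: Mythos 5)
Your proposal is correct and coincides with the paper's own argument: the paper simply says the proof is ``exactly like the proof of Lemma \ref{l:everything_fib}'', which is precisely the two lifting/retraction arguments you spell out. Nothing further is needed.
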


\begin{proof}
Exactly like the proof of Lemma \ref{l:everything_fib}.
\end{proof}

\begin{thm}\label{t:main}
Let $\mcal{M}$ be a small almost ind-admissible simplicial weak cofibration category. Suppose that $\mcal{M}$ has functorial factorizations into a cofibration followed by a weak equivalence and that very object in $\cM$ is fibrant. Then $\cM$ is ind-admissible.
\end{thm}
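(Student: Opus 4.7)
The plan is to apply Theorem \ref{t:admiss_iff} to the category $\cM$. That theorem reduces ind-admissibility (under the standing hypotheses of small, almost ind-admissible, simplicial, and functorial factorization into a cofibration followed by a weak equivalence) to the existence of a fibrant replacement functor on the right model category $\Ind(\cM)$ (from Theorem \ref{t:almost_model_dual}) that reflects weak equivalences and preserves cofibrant objects.

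First, I would exploit the hypothesis that every object of $\cM$ is fibrant. By Lemma \ref{l:everything_fib_small}, this is equivalent to saying that every acyclic cofibration in $\cM$ admits a left inverse. Then, appealing to Lemma \ref{l:everything_fib}, the same retract property upgrades to the statement that every object of $\Ind(\cM)$ is fibrant in the right model structure provided by Theorem \ref{t:almost_model_dual}.

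Consequently, the identity functor $\id_{\Ind(\cM)}\colon \Ind(\cM)\to \Ind(\cM)$ qualifies as a fibrant replacement functor: every object is already fibrant, and the natural transformation $\id\Rightarrow \id$ is pointwise an isomorphism, hence a levelwise weak equivalence. The identity trivially reflects weak equivalences and trivially preserves cofibrant objects. All hypotheses of Theorem \ref{t:admiss_iff} are now in place, and applying it concludes that $\cM$ is ind-admissible.

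The proof is essentially a bookkeeping argument chaining three previously established results (Lemma \ref{l:everything_fib_small}, Lemma \ref{l:everything_fib}, and Theorem \ref{t:admiss_iff}); there is no genuinely hard step. The only thing to verify carefully is that the definition of ``fibrant object'' in a weak cofibration category (as given just before Lemma \ref{l:everything_fib_small}) is indeed the same notion used in the hypothesis, so that the two lemmas can be legitimately combined to transport fibrancy from $\cM$ to $\Ind(\cM)$.
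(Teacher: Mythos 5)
Your proposal is correct and matches the paper's own proof essentially verbatim: it invokes Theorem \ref{t:admiss_iff}, transports fibrancy of all objects from $\cM$ to $\Ind(\cM)$ via Lemmas \ref{l:everything_fib_small} and \ref{l:everything_fib}, and takes the identity as the fibrant replacement functor.
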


\begin{proof}
By Theorem \ref{t:admiss_iff} in order to show that $\cM$ is ind-admissible it is enough to show that there exists a fibrant replacement functor in the quasi model category $\Ind(\cM)$ that reflects weak equivalences and preserves cofibrant objects.
By Lemmas \ref{l:everything_fib} and \ref{l:everything_fib_small}, every object in $\Ind(\cM)$ is fibrant. Thus, we can simply choose the identity functor on $\Ind(\cM)$ as our fibrant replacement functor.
\end{proof}

\begin{rem}
  The almost ind-admissibility condition on a weak cofibration category $\cM$, appearing in Theorems \ref{t:admiss_iff} and \ref{t:main}, can be verified by checking that $\cM$ has finite limits and factorizations of the form $\Mor(\cM)=RP\circ LP$ and $\Mor(\cM)=\cW\circ LP$ (see Proposition \ref{p:compose}).
\end{rem}

\subsection{Our main criterion: special weak cofibration categories}
\begin{define}\label{d:cylinder}
Let $(\mcal{M},\mcal{W},\mcal{C})$ be a simplicial weak cofibration category.

Let $f:A\to B$ be a morphism in $\cM$. We define the mapping cylinder of $f$ to be the pushout
$$\xymatrix{A\ar[r]^{i_0}\ar[d]^f & \Delta^1\otimes A\ar[d] \\
             B \ar[r]^j & C(f).}$$

We define a morphism $p:C(f)=B\coprod_{A}\Delta^1\otimes A\to B$ to be the one induced by the commutative square
$$\xymatrix{A\ar[dd]^{f}\ar[r]^{i_0} & \Delta^1\otimes A\ar[dd]^{}\ar[dr] & \\ & & \Delta ^0\otimes A\cong A, \ar[dl]^f\\
                 B \ar[r]^{=} & B & }$$
and we define a morphism $i:A\to C(f)=B\coprod_{A}\Delta^1\otimes A$ to be the composition
$${A\xrightarrow{i_1}  \Delta^1\otimes A \xrightarrow{}  C(f).}$$

Clearly $f=pi$, and we call this the mapping cylinder factorization.
\end{define}

\begin{lem}\label{l:fact}
Let $(\mcal{M},\mcal{W},\mcal{C})$ be a simplicial weak cofibration category such that every object in $\cM$ is cofibrant. Then the mapping cylinder factorization is a functorial factorization in $\cM$ into a cofibration followed by a weak equivalence.
\end{lem}

\begin{proof}
Since every object in $\cM$ is cofibrant, $\cM$ is a Brown category of cofibrant objects. Let $B$ be any object of $\cM$. Since $B$ is cofibrant and $\Delta^{\{0\}}\to \Delta^1$ is an acyclic cofibration, we get that
$$B\cong \Delta^{\{0\}}\otimes B\xrightarrow{i_0} \Delta^1\otimes B$$
is an acyclic cofibration. A similar argument shows that $$B\sqcup B\cong (\Delta^{\{0\}}\sqcup \Delta^{\{1\}})\otimes B\xrightarrow{(i_0,i_1)} \Delta^1\otimes B$$
 is a cofibration. Since the composition
$$B\xrightarrow{i_0} \Delta^1\otimes B\xrightarrow{\pi}\Delta^{\{0\}}\otimes B\cong B$$
is the identity, we get that $\pi$ is a weak equivalence by two out of three. We thus obtain that $(\Delta^1\otimes B,\pi,i_0,i_1)$ is a cylinder object for $B$. Now the result follows from Brown's factorization lemma \cite{Bro}.
\end{proof}

\begin{lem}\label{l:homotopy}
Let $(\mcal{M},\mcal{W},\mcal{C})$ be a simplicial weak cofibration category such that every object in $\cM$ is cofibrant. Then the map $jp$ in the mapping cylinder factorization is simplicialy homotopic to the identity map on $C(f)$ via a \emph{cofiber preserving homotopy}. That is, there exists a morphism $H: \Delta^1\otimes C(f)\to C(f)$ satisfying $H i_1=\id_{C(f)}$, $ H i_0=jp$ and such that the following diagram commutes:
$$\xymatrix{\Delta^1\otimes C(f)\ar[r]^{}\ar[d]^{H} & \Delta^0\otimes C(f)\cong C(f)\ar[d]^{p} \\
                 C(f)  \ar[r]^{p} & B.}$$
\end{lem}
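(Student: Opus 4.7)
The plan is to build $H$ explicitly by gluing together two natural maps, using the standard fact that the cylinder $\Delta^1$ admits a canonical deformation retraction onto its vertex $\Delta^{\{0\}}$. Since $\otimes$ preserves finite colimits in each variable (this is part of the definition of a simplicial weak cofibration category), tensoring the defining pushout of $C(f)$ with $\Delta^1$ yields
$$\Delta^1\otimes C(f)\;\cong\;(\Delta^1\otimes B)\coprod_{\Delta^1\otimes A}(\Delta^1\otimes \Delta^1\otimes A),$$
where the left leg is $\id_{\Delta^1}\otimes f$ and the right leg is $\id_{\Delta^1}\otimes i_0$. Thus $H$ is determined by giving compatible maps out of the two summands.

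On $\Delta^1\otimes B$ I would take $H_B := j\circ \pi_B$, where $\pi_B:\Delta^1\otimes B\to \Delta^0\otimes B\cong B$ is induced by $\Delta^1\to\Delta^0$. For the piece $\Delta^1\otimes \Delta^1\otimes A$, I would use the simplicial map $h:\Delta^1\times\Delta^1\to\Delta^1$ characterized on vertices by $h(a,b)=\min(a,b)$; this is a poset map, satisfies $h|_{\Delta^{\{1\}}\times\Delta^1}=\id_{\Delta^1}$ and $h|_{\Delta^{\{0\}}\times\Delta^1}=\mathrm{const}_0$, and exists because $\cS_f$ contains all finite simplicial sets. Then define
$$H_A\;:\;\Delta^1\otimes\Delta^1\otimes A\;\xrightarrow{\;h\otimes \id_A\;}\;\Delta^1\otimes A\;\longrightarrow\; C(f),$$
where the last arrow is the pushout inclusion. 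To verify that $H_B$ and $H_A$ agree on $\Delta^1\otimes A$, I would compute both composites explicitly: on the $H_B$ side the map is $\Delta^1\otimes A\xrightarrow{\id\otimes f}\Delta^1\otimes B\xrightarrow{j\pi_B}C(f)$, equal to $jf\pi_A$; on the $H_A$ side, precomposition with $\id_{\Delta^1}\otimes i_0$ composes $h$ with the vertex map $\id\times 0$, which is constant $0$, so the composite factors as $\Delta^1\otimes A\xrightarrow{\pi_A}A\xrightarrow{i_0}\Delta^1\otimes A\to C(f)$, and this also equals $jf\pi_A$ by the pushout relation.

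It then remains to verify the three identities. For $Hi_1=\id_{C(f)}$ I would restrict along the two summands: on $B$ the restriction of $H_B\circ i_1$ is $j$, and on $\Delta^1\otimes A$ the restriction of $H_A\circ i_1$ equals the inclusion, using $h|_{\Delta^{\{1\}}\times\Delta^1}=\id$. For $Hi_0=jp$: on $B$, $p\circ j=\id_B$ so $jp\circ j=j$, matching $H_B\circ i_0$; on $\Delta^1\otimes A$, $jp$ restricts to $jf\pi_A$, matching $H_A\circ i_0$ via $h|_{\Delta^{\{0\}}\times\Delta^1}=\mathrm{const}_0$. For the cofiber preservation square, I would check $p\circ H=p\circ \pi$ separately on the two summands, noting that on $\Delta^1\otimes B$ both composites equal $\pi_B$, while on $\Delta^1\otimes\Delta^1\otimes A$ both equal $f$ composed with the total projection to $A$, again using that $h$ lands in $\Delta^1$ and is collapsed by $p$.

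The main obstacle, which is really just bookkeeping rather than a genuine difficulty, is to produce the single simplicial map $h:\Delta^1\times\Delta^1\to\Delta^1$ encoding the homotopy from $\id$ to $\mathrm{const}_0$ and to track the naturality of the associator $(\Delta^1\times\Delta^1)\otimes A\cong \Delta^1\otimes(\Delta^1\otimes A)$ so that all the pushout identifications, the two projection maps, and the vertex inclusions $i_0,i_1$ interact correctly. Once $h$ is in hand and one systematically decomposes everything through the pushout description of $\Delta^1\otimes C(f)$, each verification reduces to a one-line compatibility in $\cS_f$ tensored with $A$.
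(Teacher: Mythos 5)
Your proposal is correct and is essentially the paper's own proof: the paper uses the identical pushout decomposition $\Delta^1\otimes C(f)\cong \Delta^1\otimes B\sqcup_{\Delta^1\otimes A}(\Delta^1\times\Delta^1)\otimes A$ and defines $H$ via the projections to $B$ and $A$ together with the map $\Delta^1\times\Delta^1\to\Delta^1$ sending $(0,0),(0,1),(1,0)$ to $0$ and $(1,1)$ to $1$, which is exactly your $\min$. The only difference is that you carry out the compatibility and endpoint verifications that the paper dismisses with ``it is not hard to verify.''
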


\begin{proof}
Since $\cC$ is a simplicial weak cofibration category, we have that the functor $\Delta^1\otimes (-):\cC\to\cC$ commutes with pushouts. Thus, we have
$$\Delta^1 \otimes C(f)\cong
\Delta^1\otimes (B\sqcup_A (\Delta^1 \otimes A))\cong
\Delta^1\otimes B\sqcup_{\Delta^1\otimes A}\Delta^1\otimes(\Delta^1\otimes A)\cong$$
$$\cong\Delta^1\otimes B\sqcup_{\Delta^1\otimes A}(\Delta^1\times \Delta^1)\otimes A.$$
We define the homotopy $H$ to be the map
$$\Delta^1\otimes B\sqcup_{\Delta^1\otimes A}(\Delta^1\times \Delta^1)\otimes A\cong \Delta^1\otimes C(f)\to C(f)\cong B\sqcup_A \Delta^1 \otimes A,$$
induced by:
\begin{enumerate}
\item The natural map $\Delta^1\otimes B\to \Delta^0\otimes B\cong B$.
\item The natural map $\Delta^1\otimes A\to \Delta^0\otimes A\cong A$.
\item The map $(\Delta^1\times\Delta^1)\otimes A\to \Delta^1\otimes A$ induced by the simplicial map $\Delta^1\times \Delta^1\to \Delta^1$ that sends $(0,0),(0,1),(1,0)$ to $0$ and $(1,1)$ to $1$.
\end{enumerate}

It is not hard to verify that these maps indeed define a map between the pushout diagrams, and thus a map $H$ as required. It is also clear that this $H$ has the required properties.
\end{proof}

\begin{prop}\label{l:right_proper}
Let $(\mcal{M},\mcal{W},\mcal{C})$ be a simplicial weak cofibration category such that every object in $\cM$ is cofibrant. Suppose that every homotopy equivalence in the simplicial category $\cC$ is a weak equivalence. Then the map $p$ in the mapping cylinder factorization is right proper.
\end{prop}

\begin{proof}
Consider a pullback square in $\cC$

$$\xymatrix{
C(f)\times_B C \ar[d]^{k}\ar[r]^\theta & C(f)\ar[d]^p\\
C \ar[r]^g & B,}$$
such that $g$ is a weak equivalence. We need to show that $\theta$ is a weak equivalence. Since $p$ is a weak equivalence, it follows from the two out of three property for the weak equivalences in $\cC$ that it is enough to show that $k$ is a weak equivalence. We will show this by constructing a homotopy inverse to $k$. We define the morphism $l:C\to C(f)\times_B C$ to be the one induced by the commutative square

$$\xymatrix{C\ar[r]^{j\circ g}\ar[d]_{=} & C(f)\ar[d]^{p} \\
                C   \ar[r]^{g} & B.}$$
Since $j$ is a right inverse to $p$ this is indeed well defined.
It remains to show that $l$ is a homotopy inverse to $k$.

It is readily verified that $kl=\id_C$.
By Lemma \ref{l:homotopy}, the map $jp$ is simplicialy homotopic to the identity map on $C(f)$ via a cofiber preserving homotopy. That is, there exists a morphism $H: \Delta^1\otimes C(f)\to C(f)$ satisfying $H i_1=\id_{C(f)}$, $ H i_0=jp$ and such that the following diagram commutes:
$$\xymatrix{\Delta^1\otimes C(f)\ar[r]^{}\ar[d]^{H} & \Delta^0\otimes C(f)\cong C(f)\ar[d]^{p} \\
                 C(f)  \ar[r]^{p} & B.}$$
The canonical map
$\theta: C(f)\times_B C\to C(f)$
induces a map
$$\Delta^1\otimes\theta:  \Delta^1\otimes(C(f)\times_B C)\to \Delta^1\otimes C(f).$$ Consider the homotopy
$$\Phi: \Delta^1\otimes(C(f)\times_B C)\to C(f)\times_B C,$$ induced by the commutative square
$$\xymatrix{\Delta^1\otimes(C(f)\times_B C)\ar[rr]^{H\circ(\Delta^1\otimes\theta)}\ar[d] & & C(f)\ar[d]^{p} \\
                C   \ar[rr]^{g} & & B,}$$
the left vertical map being the composite
$$\Delta^1\otimes(C(f)\times_B C)\to \Delta^0\otimes(C(f)\times_B C)\cong C(f)\times_B C\to C.$$
From the definition it follows now that $\Phi i_0 = \id_{C(f)\times_B C}$ and $\Phi i_1= lk$, demonstrating that $lk$ is homotopic to $\id_{C(f)\times_B C}$.
\end{proof}

\begin{define}\label{d:special WCC}
A small simplicial weak cofibration category $\mcal{M}$ is called \emph{special} if the following conditions are satisfied:
\begin{enumerate}
\item $\cM$ has finite limits.
\item Every object in $\cM$ is fibrant and cofibrant.
\item A map in $\cM$ that is a homotopy equivalence in the simplicial category $\cM$ is also a weak equivalence.
\end{enumerate}
\end{define}

\begin{thm}\label{t:main2}
Let $\mcal{M}$ be a special weak cofibration category (see Definition \ref{d:special WCC}).
Then $\cM$ is left proper and ind-admissible.
\end{thm}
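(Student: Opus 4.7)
The plan is to prove that $\cM$ is left proper first, and then to deduce ind-admissibility by invoking Theorem~\ref{t:main}. Since every object of $\cM$ is cofibrant, Lemma~\ref{l:fact} supplies the mapping cylinder as a functorial factorization $f = p_f \circ i_f$ into a cofibration followed by a weak equivalence; by Lemma~\ref{l:homotopy}, the weak equivalence $p_f$ is moreover a cofiber-preserving simplicial deformation retract onto its target.

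For left properness, I would consider a pushout of a weak equivalence $g : A \to C$ along a cofibration $i : A \to B$ and factor $g = p_g \circ i_g$. By two-out-of-three for $\cW$ in $\cM$, the map $i_g$ lies in $\cC \cap \cW$. Decomposing the pushout into two stacked squares, the first pushes out $i_g$ along $i$, giving an acyclic cofibration $B \to X := B \sqcup_A C(g)$ by closure of $\cC \cap \cW$ under cobase change; the second pushes out $p_g$ along the induced cofibration $\alpha : C(g) \to X$, giving $p'_g : X \to B \sqcup_A C$. I would then show that $p'_g$ is a simplicial homotopy equivalence by dualizing the argument in Lemma~\ref{l:right_proper}: the section $j_g$ and the cofiber-preserving homotopy $H : \Delta^1 \otimes C(g) \to C(g)$ from $j_g p_g$ to $\id$ push forward along $\alpha$ to produce, via the pushout universal property and the tensoring with $\Delta^1$, a simplicial homotopy inverse to $p'_g$ together with the required cofiber-preserving homotopies. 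By condition~(3) of specialness, $p'_g \in \cW$, and the entire pushout map is then a composite of weak equivalences.

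With $\cM$ now left proper, cofibrations are left-proper maps. Combined with Lemma~\ref{l:right_proper}, the mapping cylinder factorization refines to $\Mor(\cM) \subseteq (RP \cap \cW) \circ LP$; in particular both $\Mor(\cM) = RP \circ LP$ and $\Mor(\cM) = \cW \circ LP$ hold. Proposition~\ref{p:compose} then yields closure under composition together with the implication $f, g \circ f \in \Lw^{\cong}(\cW) \Rightarrow g \in \Lw^{\cong}(\cW)$ in $\Ind(\cM)$, which is exactly almost ind-admissibility in the sense of Definition~\ref{d:almost_admiss_dual}. All remaining hypotheses of Theorem~\ref{t:main} hold --- $\cM$ is a small simplicial weak cofibration category with the functorial factorization from Lemma~\ref{l:fact} and every object fibrant by specialness --- so Theorem~\ref{t:main} upgrades almost ind-admissibility to full ind-admissibility.

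The main technical obstacle is the construction of the simplicial homotopy inverse to $p'_g$ in the second pushout square. In the pullback case treated by Lemma~\ref{l:right_proper}, the section $l$ arises directly from the universal property because $p \circ j = \id$ holds strictly; its dual $j \circ p = \id$ fails here, being only realized through the homotopy $H$. The naive candidate $Y \to X$ defined using $j_g$ sends the image of $A \subset C$ to the $i_0$-end of the cylinder $\Delta^1 \otimes A \subset C(g)$, but sends the image of $A \subset B$ to the $i_1$-end, so the two ends do not agree on $A$ inside $X$ and no strict map is produced. The cofiber-preservation of $H$, together with the tensor action of $\cS_f$ on $\cM$, is precisely what absorbs this discrepancy into a simplicial homotopy and produces a bona fide inverse.
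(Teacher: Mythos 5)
Your second half — reading off $\Mor(\cM)=(RP\cap\cW)\circ LP$ from the mapping cylinder factorization via Lemmas \ref{l:fact} and \ref{l:right_proper}, deducing almost ind-admissibility from Proposition \ref{p:compose} (parts 1 and 3, using the finite limits from condition 1), and then upgrading to ind-admissibility via Theorem \ref{t:main} — is exactly the paper's argument. Where you diverge is left properness: the paper notes that cofibrancy of every object makes $\cM$ a Brown category of cofibrant objects and simply cites the gluing lemma \cite[Lemma 8.5]{GJ}, whereas you attempt a direct proof by splitting the pushout of $g$ along $i$ through the mapping cylinder. Your first square is fine ($i_g\in\cC\cap\cW$ by two out of three, so its cobase change is an acyclic cofibration), and everything reduces, as you say, to showing that the cobase change $p'_g$ of $p_g$ along the cofibration $\alpha:C(g)\to X$ is a weak equivalence.

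That is where there is a genuine gap. You correctly diagnose that the naive dual of Lemma \ref{l:right_proper} fails because $j_g p_g=\id$ only up to homotopy, but your proposed fix — that $H$ ``pushes forward along $\alpha$ \dots via the pushout universal property and the tensoring with $\Delta^1$'' — is an assertion, not a construction, and the pushout universal property alone cannot produce the candidate inverse $q:B\coprod_A C\to X$: one must first replace $\id_X$ by a map $q_X:X\to X$, homotopic to it, whose restriction along $\alpha$ is \emph{strictly} $\alpha j_g p_g$. That replacement is a homotopy extension: one lifts the pair $(\alpha H,\id_X)$ against the map $\Delta^1\otimes C(g)\coprod_{\Delta^{\{1\}}\otimes C(g)}\Delta^{\{1\}}\otimes X\to\Delta^1\otimes X$, which is an acyclic cofibration by the pushout-product axiom applied to $\Delta^{\{1\}}\to\Delta^1$ and $\alpha$, and the lift $\tilde H$ exists only because every object of $\cM$ is \emph{fibrant} (condition 2 of Definition \ref{d:special WCC}) — an ingredient your sketch never invokes. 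With $q_X:=\tilde H i_0$ the pair $(q_X,\alpha j_g)$ does glue to a map $q$, $\tilde H$ gives $qp'_g\sim\id_X$, and $p'_g\tilde H$ glued with the constant homotopy on $C$ (these agree on $\Delta^1\otimes C(g)$ precisely because $H$ is cofiber preserving) gives $p'_gq\sim\id$; condition (3) then makes $p'_g$ a weak equivalence. So your route is completable, but as written the key step is missing its essential ingredient; the paper's appeal to the gluing lemma sidesteps this entirely and uses only cofibrancy.
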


\begin{proof}
Since every object in $\cM$ is cofibrant, $\cM$ is a Brown category of cofibrant objects. It follows from \cite[Lemma 8.5]{GJ} that $\cM$ is left proper. According to Lemma \ref{l:fact}, the mapping cylinder factorization gives a functorial factorization in $\cM$ into a cofibration followed by a weak equivalence. Using condition 3 of Definition \ref{d:special WCC}, Proposition \ref{l:right_proper}, and the fact that $\cM$ is left proper, we see that the mapping cylinder factorization gives a factorization in $\cM$ of the form $$\Mor(\cM)= (RP\cap \cW)\circ LP.$$
Using condition 1 and Proposition \ref{p:compose}, we see that $\cM$ is almost ind-admissible (see Definition \ref{d:almost_admiss_dual}). Thus our theorem now follows from Theorem \ref{t:main}.
\end{proof}

\begin{thm}\label{t:main3}
Let $(\mcal{M},\mcal{W},\mcal{C})$ be a special weak cofibration category (see Definition \ref{d:special WCC}).
Then there exists a simplicial model category structure on $\Ind(\cM)$ such that:
\begin{enumerate}
\item The weak equivalences are $\mathbf{W} := \Lw^{\cong}(\cW)$.
\item The cofibrations are $\mathbf{C} := \R(\coSp^{\cong}(\cC))$.
\item The fibrations are $\mathbf{F} :=(\cC\cap \cW)^{\perp}$.
\end{enumerate}

Moreover, this model category is finitely combinatorial, with set of generating cofibrations $\cC$ and set of generating acyclic cofibrations $\cC\cap \cW$.

The model category $\Ind(\cM)$ has the following further properties:
\begin{enumerate}
\item Every object in $\Ind(\cM)$ is fibrant.
\item $\Ind(\cM)$ is proper.
\end{enumerate}
\end{thm}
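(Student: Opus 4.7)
The plan is to assemble this theorem directly from results already established in the paper, since a special weak cofibration category is designed to satisfy every hypothesis we need. First I would verify that Theorem \ref{t:main2} applies: by definition a special weak cofibration category is simplicial, small, has finite limits, and has every object fibrant and cofibrant, with simplicial homotopy equivalences being weak equivalences; hence Theorem \ref{t:main2} gives that $\cM$ is left proper and ind-admissible.

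Next I would check that Theorem \ref{t:main1} applies to $\cM$. Since $\cM$ is ind-admissible it is in particular almost ind-admissible, and it is already a small simplicial weak cofibration category in which every object is fibrant. The only remaining hypothesis is the existence of a functorial factorization into a cofibration followed by a weak equivalence, and this is provided by Lemma \ref{l:fact} via the mapping cylinder construction, which is available because every object of $\cM$ is cofibrant. Invoking Theorem \ref{t:main1} then produces the simplicial, finitely combinatorial model structure on $\Ind(\cM)$ with the asserted description of $\mathbf{W}$, $\mathbf{C}$, $\mathbf{F}$ and generating sets, establishes that every object of $\Ind(\cM)$ is fibrant, and gives right properness.

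It remains to upgrade right properness to full properness. For this I would invoke the ind-version of Corollary \ref{c:r_proper} (stated in the introduction as Proposition \ref{c:r_proper}): since $\cM$ is a small left proper ind-admissible weak cofibration category (from Theorem \ref{t:main2}), the induced model structure on $\Ind(\cM)$ is left proper. Combined with the right properness already obtained from Theorem \ref{t:main1}, this yields that $\Ind(\cM)$ is proper, completing the proof.

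There is no real obstacle here — the theorem is essentially a packaging result, and the genuine content has already been proven in Theorems \ref{t:main1}, \ref{t:main2}, Lemma \ref{l:fact}, and Proposition \ref{c:r_proper}. The only thing to be careful about is to verify that the description of cofibrations $\mathbf{C} := \R(\coSp^{\cong}(\cC))$ given here agrees with the description coming out of Theorem \ref{t:main1}, which it does verbatim, so no additional argument is needed.
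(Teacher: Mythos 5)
Your proposal is correct and follows essentially the same route as the paper: the paper's own proof says the argument "goes exactly like the proof of Theorem \ref{t:main1}, using Theorem \ref{t:main2} instead of Theorem \ref{t:main}," and then obtains left properness from Theorem \ref{t:main2} together with the dual of Corollary \ref{c:r_proper}. Your only (harmless) variation is to invoke Theorem \ref{t:main1} directly as a black box after checking its hypotheses (ind-admissible implies almost ind-admissible, and Lemma \ref{l:fact} supplies the functorial factorization since every object is cofibrant), which is a clean and valid packaging of the same argument.
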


\begin{proof}
The fact that the prescribed weak equivalences fibrations and cofibrations give a finitely combinatorial model structure on $\Ind(\cM)$, with set of generating cofibrations $\cC$ and set of generating acyclic cofibrations $\cC\cap \cW$ follows from Theorems \ref{t:almost_model_dual} and \ref{t:main2}. The fact that it is simplicial follows from Proposition \ref{p:simplicial}.

As for the further properties; the first one follows from Lemmas \ref{l:everything_fib} and \ref{l:everything_fib_small}. Right properness follows from the fact that every model category in which every object is
fibrant is right proper (see for example \cite[Proposition A.2.4.2]{Lur}) and left properness follows from the fact that $\cM$ is left proper (by Theorem \ref{t:main2}) and Proposition  \ref{c:r_proper}.
\end{proof}

\section{Compact metrizable spaces}\label{s:CM}
In the previous section we defined the notion of a special weak cofibration category, and showed that every special weak cofibration category is ind admissible. In this section we discuss a way to construct special weak cofibration categories.
We end by an example from the category of compact metrizable spaces.

\subsection{Constructing special weak cofibration categories}
Let $\cC$ be a small category with finite colimits. Suppose that $\cC$ is tensored over $\cS_f$ and the action is weakly  closed  (see Definition \ref{d:tensored} 1).
Note that $\cC$ is automatically enriched in $\cS$, with the defining property that for every $A,B\in\cC$ and $L\in S_f$ we have
$$\Hom_{\cS}(L,\Map_{\cC}(A,B))\cong \Hom_{\cC}(L\otimes A,B).$$

\begin{prop}\label{p:special}
Let $\cD$ be a simplicial weak cofibration category (see Definition \ref{d:tensored}), and let $J:\cC\to \cD$ be a functor  such that:
\begin{enumerate}
\item The functor $J$ commutes with finite colimits and the simplicial action.
\item The functor $J$ lands in the cofibrant objects of $\cD$.
\end{enumerate}

Then if we define a map $f$ in $\cC$ to be a {\em weak equivalence} or a {\em cofibration} if $J(f)$ is so in $\cD$, $\cC$ becomes a simplicial weak cofibration category in which every object is cofibrant. If, furthermore, the functor $J:\cC\to\cD$ is fully faithful and lands in the fibrant objects of $\cD$, then every object in $\cC$ is fibrant.
\end{prop}

\begin{proof}
It is easy to see that the cofibrations and the weak equivalences in $\cC$ are subcategories, the weak equivalences in $\cC$ have the two out of three property and the cofibrations and acyclic cofibrations in $\cC$ are closed under cobase change.

Since $J$ commutes with finite colimits and the simplicial action and $\cD$ is a simplicial weak cofibration category, it is not hard to see that for every cofibration $j:K\to L$ in $\cS_f$ and every cofibration $i:X\to Y$ in $\cC$ the induced map
$$K \otimes Y \coprod_{K \otimes X}L\otimes X\to L\otimes Y$$
is a cofibration in $\cC$, which is acyclic if either $i$ or $j$ is.

We now define for every morphism $f:A\to B$ in $\cC$ the mapping cylinder factorization of $f$
$$A\xrightarrow{i}B\coprod_{A}\Delta^1\otimes A\xrightarrow{p}B$$
just as in Definition \ref{d:cylinder}. Since $J$ lands in the cofibrant objects in $\cD$ it follows that every object of $\cC$ is cofibrant. It can thus be shown, just like in Lemma \ref{l:fact}, that the mapping cylinder factorization is a functorial factorization in $\cC$ into a cofibration followed by a weak equivalence.

It is easy to see that if the functor $J:\cC\to\cD$ is furthermore fully faithful and lands in the fibrant objects of $\cD$, then every object in $\cC$ is fibrant.
\end{proof}

The last proposition brings us close to a special weak cofibration category (see Definition \ref{d:special WCC}). We now wish to consider a specific example of a category $\cD$ and a functor $J$, as in the last proposition, that will indeed produce a special weak cofibration category structure on $\cC$.

Consider the enriched dual Yoneda embedding
$$Y:A\mapsto \Map_{\cC}(A,-):\cC\to (\cS^\cC)^{\op}.$$
We endow the category $\cS^\cC$ with the \emph{projective} model structure. In particular, this makes $(\cS^\cC)^{\op}$ into a weak cofibration category. The category
$\cS^\cC$, in the projective structure, is a simplicial model category (see for example \cite[Remark A.3.3.4]{Lur}).
Note, that the notion of a simplicial model category is self dual. That is, if  $\cA$ is a simplicial model category
then  $\cA^{\op}$ is a simplicial model category, with the natural dual simplicial structure.
In particular, it follows that $(\cS^\cC)^{\op}$ is a simplicial model category with
$$\otimes_{(\cS^\cC)^{\op}}:=\hom_{\cS^\cC}^{\op}:\cS\times (\cS^\cC)^{\op}\to (\cS^\cC)^{\op}.$$

\begin{lem}\label{l:commute}
The functor $Y:\cC\to (\cS^\cC)^{\op}$  commutes with finite colimits and the simplicial action.
\end{lem}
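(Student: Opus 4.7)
The plan is to verify the two assertions separately, reducing both to the defining adjunction of the simplicial enrichment $\Hom_{\cS}(L,\Map_{\cC}(A,B))\cong \Hom_{\cC}(L\otimes A,B)$ and the enriched Yoneda lemma.

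For commutation with finite colimits, recall that colimits in $(\cS^{\cC})^{\op}$ are limits in $\cS^{\cC}$, and limits in $\cS^{\cC}$ are computed pointwise. Hence, given a finite colimit $\colim_i A_i$ in $\cC$, it suffices to show that for every $B\in\cC$ the canonical map
$$\Map_{\cC}(\colim_i A_i,B)\lrar \lim_i \Map_{\cC}(A_i,B)$$
is an isomorphism of simplicial sets. By the Yoneda lemma in $\cS$, it is enough to check this after applying $\Hom_{\cS}(L,-)$ for every $L\in\cS$; and since everything in sight commutes with filtered colimits in $L$, it even suffices to take $L\in\cS_f$. Using the enrichment adjunction and the fact that the action of $\cS_f$ on $\cC$ is weakly closed (so $L\otimes(-)$ commutes with finite colimits in $\cC$), we compute
$$\Hom_{\cS}(L,\Map_{\cC}(\colim_i A_i,B))\cong \Hom_{\cC}(L\otimes \colim_i A_i,B)\cong \Hom_{\cC}(\colim_i L\otimes A_i,B),$$
which in turn equals $\lim_i \Hom_{\cC}(L\otimes A_i,B)\cong \lim_i\Hom_{\cS}(L,\Map_{\cC}(A_i,B))\cong \Hom_{\cS}(L,\lim_i\Map_{\cC}(A_i,B))$, as required.

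For commutation with the simplicial action, recall that the simplicial action on $(\cS^{\cC})^{\op}$ is, by definition, $L\otimes_{(\cS^{\cC})^{\op}}F=\hom_{\cS^{\cC}}(L,F)$, computed pointwise. Thus I must produce a natural isomorphism
$$\Map_{\cC}(L\otimes A,-)\;\cong\;\hom_{\cS^{\cC}}(L,\Map_{\cC}(A,-))$$
in $\cS^{\cC}$, for $L\in\cS_f$ and $A\in\cC$. Evaluated at $B\in\cC$, this becomes the assertion $\Map_{\cC}(L\otimes A,B)\cong \Map_{\cC}(A,B)^{L}$. I would verify this by computing $n$-simplices: using the coherent associativity $\Delta^n\otimes(L\otimes A)\cong(\Delta^n\times L)\otimes A$ (which follows from the assumed left action axioms, together with the fact that $\cS_f$ is a full monoidal subcategory of $\cS$) and the enrichment adjunction twice, one obtains
$$\Hom_{\cC}(\Delta^n\otimes (L\otimes A),B)\cong\Hom_{\cC}((\Delta^n\times L)\otimes A,B)\cong \Hom_{\cS}(\Delta^n\times L,\Map_{\cC}(A,B)),$$
which is exactly the set of $n$-simplices of $\Map_{\cC}(A,B)^{L}$. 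Naturality in $B$ is straightforward from the naturality of each of the isomorphisms used.

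The computations are essentially formal consequences of the enrichment axioms; the only mild delicacy is keeping track of the reversal when passing between $\cS^{\cC}$ and $(\cS^{\cC})^{\op}$, and confirming that the internal hom $\hom_{\cS^{\cC}}(L,-)$ is indeed computed pointwise in $\cS^{\cC}$ with its projective structure, so that the pointwise verification above really does produce an isomorphism of functors. This is the main point to state carefully, after which the rest is just bookkeeping.
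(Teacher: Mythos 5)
Your proof is correct and follows essentially the same route as the paper, which simply declares both halves ``clear'' after reducing the simplicial-action claim to the isomorphism $\Map_{\cC}(K\otimes A,B)\cong \hom_{\cS}(K,\Map_{\cC}(A,B))$; you fill in exactly the details the paper omits (pointwise limits in $\cS^{\cC}$, the Yoneda/$n$-simplex reduction, the coherence isomorphism, and the weakly closed hypothesis for the colimit half). No gaps.
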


\begin{proof}
The fact that $Y$ commutes with finite colimits is clear.
It is left to show that there are coherent natural isomorphisms
$$\Map_{\cC}(K\otimes A,-)\cong K\otimes_{(\cS^\cC)^{\op}} \Map_{\cC}(A,-)$$
for $K\in \cS_f$ and $A\in \cC$.
Thus, for every $K\in \cS_f$ and $A,B\in \cC$, we need to supply an isomorphism
$$\Map_{\cC}(K\otimes A,B)\cong \hom_{\cS}(K, \Map_{\cC}(A,B)),$$
but this is clear.
\end{proof}

The following lemma is clear
\begin{lem}\label{l:lands in cofibrant}
The functor $Y:\cC\to(\cS^\cC)^{\op}$ lands in the cofibrant objects of $(\cS^\cC)^{\op}$ iff for every $A,B\in\cC$ the simplicial set $\Map_\cC(A,B)$ is a Kan complex.
\end{lem}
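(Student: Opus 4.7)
The plan is to unwind the definitions on both sides and observe that they match up objectwise. The key facts to use are the standard characterization of fibrant objects in the projective model structure on $\cS^\cC$, and the duality between cofibrant objects in $(\cS^\cC)^{\op}$ and fibrant objects in $\cS^\cC$.

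First I would recall that in the projective model structure on $\cS^\cC$, weak equivalences and fibrations are defined objectwise. In particular, a functor $F \in \cS^\cC$ is fibrant if and only if for every object $C \in \cC$ the simplicial set $F(C)$ is a Kan complex. Dually, an object of $(\cS^\cC)^{\op}$ is cofibrant if and only if the corresponding object of $\cS^\cC$ is fibrant.

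Next I would apply this directly to $Y(A) = \Map_{\cC}(A,-)$. The functor $Y$ lands in the cofibrant objects of $(\cS^\cC)^{\op}$ if and only if, for every $A \in \cC$, the presheaf $\Map_{\cC}(A,-) \in \cS^\cC$ is fibrant in the projective model structure. By the objectwise criterion, this is equivalent to requiring that for every $A \in \cC$ and every $B \in \cC$, the simplicial set $\Map_{\cC}(A,B)$ is a Kan complex, which is exactly the stated condition.

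There is no real obstacle: once the projective fibrancy criterion and the $(-)^{\op}$-duality for simplicial model structures (both recalled earlier in the subsection) are invoked, the equivalence is immediate. This explains why the paper labels the lemma as clear.
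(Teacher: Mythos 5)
Your proposal is correct and is exactly the unwinding the paper has in mind: the paper offers no proof (it declares the lemma ``clear''), and the intended argument is precisely that cofibrancy in $(\cS^\cC)^{\op}$ means fibrancy in the projective model structure on $\cS^\cC$, which is the objectwise Kan condition applied to $Y(A)=\Map_{\cC}(A,-)$. Nothing is missing.
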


\begin{lem}\label{l:lands in fibrant}
The functor $Y:\cC\to(\cS^\cC)^{\op}$ lands in the fibrant objects of $(\cS^\cC)^{\op}$.
\end{lem}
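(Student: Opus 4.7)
The plan starts from unpacking the definition: $Y(A) = \Map_\cC(A,-)$ is fibrant in $(\cS^\cC)^{\op}$ iff it is cofibrant in $\cS^\cC$ equipped with the projective model structure. Since acyclic fibrations in the projective structure are precisely the levelwise trivial Kan fibrations, it suffices to verify the left lifting property of $\Map_\cC(A,-)$ against an arbitrary natural transformation $p\colon F \to G$ in $\cS^\cC$ whose components are trivial Kan fibrations.

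The key observation is that the identity $\Map_\cC(A,B)_n = \Hom_\cC(\Delta^n \otimes A, B)$ exhibits $\Map_\cC(A,-)$ as the geometric realization in $\cS^\cC$ of the simplicial object $[n] \mapsto h^{\Delta^n \otimes A}$, where $h^C := \Hom_\cC(C,-)$; explicitly, as the coend
$$\Map_\cC(A,-) \;\cong\; \int^{[n]\in\Delta^{\op}} \Delta^n \otimes \Hom_\cC(\Delta^n \otimes A,-).$$
The skeletal filtration of this coend presents $\Map_\cC(A,-)$ as a sequential colimit whose consecutive stages are pushouts along maps of the form $\partial\Delta^n \otimes h^C \hookrightarrow \Delta^n \otimes h^C$. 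These are generating projective cofibrations in $\cS^\cC$, since each discrete representable $h^C$ is projectively cofibrant and $\cS^\cC$ carries the pointwise simplicial action from $\cS$. Hence $\Map_\cC(A,-)$ is built as a relative cell object from the initial object, and is therefore projectively cofibrant.

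The main technical obstacle is justifying that the skeletal filtration really proceeds by projective cofibrations, which amounts to the Reedy cofibrancy of the simplicial object $[n]\mapsto h^{\Delta^n \otimes A}$ in $(\cS^\cC)_{\mathrm{proj}}$. This reduces to identifying the latching object at level $n$ with the union of the images of the degeneracies $h^{s^i \otimes A}\colon h^{\Delta^{n-1}\otimes A} \to h^{\Delta^n \otimes A}$, and checking that the inclusion of this latching object into $h^{\Delta^n\otimes A}$ is a projective cofibration; this follows from the standard non-degenerate simplex decomposition of simplicial sets together with the fact that representables $h^C$ are finitely projective objects of $\cS^\cC$. Alternatively, one may invoke the enriched form of the Yoneda lemma in the framework of \cite[Remark A.3.3.4]{Lur} to recognize $\Map_\cC(A,-)$ directly as an enriched representable in the projective model structure, from which projective cofibrancy is standard.
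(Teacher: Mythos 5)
Your main argument has a genuine gap, located exactly where you flag the ``main technical obstacle,'' and the difficulty is not merely technical. The category $\cS^{\cC}$ here is the category of $\cS$-\emph{enriched} functors (this is the setting of \cite[Remarks A.3.3.4 and A.3.3.5]{Lur}, on which the paper relies, and it is needed elsewhere, e.g.\ for $Y$ to commute with the simplicial action). In that category the discrete representables $h^C=\Hom_{\cC}(C,-)$ are not objects at all: enriched functoriality would require the composition $\Map_{\cC}(B,B')\times \Hom_{\cC}(C,B)\to \Map_{\cC}(C,B')$ to land in the discrete part of the target, which it does not. Consequently the generating projective cofibrations are not $\partial\Delta^n\otimes h^C\to \Delta^n\otimes h^C$ but rather $F_{\partial\Delta^n}^{C}\to F_{\Delta^n}^{C}$, where $F_{X}^{C}=\Map_{\cC}(C,-)\times X$ is built from the \emph{enriched} representable; your coend and its skeletal filtration do not live in the category whose cofibrant objects you must identify. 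If instead you read $\cS^{\cC}$ as unenriched diagrams (where your generating cofibrations would be the right ones), the Reedy cofibrancy claim fails: the latching object of $[n]\mapsto h^{\Delta^n\otimes A}$ is the subfunctor of degenerate simplices, but its complement is \emph{not} a subfunctor, since postcomposing a nondegenerate map $\Delta^n\otimes A\to B$ with some $B\to B'$ can produce a degenerate one (take $B'$ terminal). Already at $n=1$ the latching map is $h^{A}\hookrightarrow h^{\Delta^1\otimes A}$, and since a representable is a connected object of the diagram category it admits no coproduct decomposition splitting off the degenerate part; one checks that this map is not a projective cofibration as soon as $\cC$ has a single nonconstant homotopy. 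The ``nondegenerate simplex decomposition'' yields cofibrations of simplicial sets, not projective cofibrations of diagrams, and finite presentability of $h^C$ does not repair this.

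The one-sentence alternative at the end of your proposal is the correct proof, and it is essentially the paper's: by \cite[Remark A.3.3.5]{Lur} the assignment $X\mapsto F_{X}^{A}=\Map_{\cC}(A,-)\times X$ carries cofibrations of $\cS$ to projective cofibrations, so applying it to $\emptyset\to\Delta^0$ exhibits $\emptyset\to\Map_{\cC}(A,-)$ as a projective cofibration. Since $\Map_{\cC}(A,-)$ is itself the enriched representable, no realization, filtration, or Reedy argument is needed.
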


\begin{proof}
For every $A\in \cC$ and every $X\in\cS$, we denote
$$F_X^A:=\Map_{\cC}(A,-)\times X\in \cS^\cC.$$
By \cite[Remark A.3.3.5]{Lur}, for every cofibration $X\to Y$ in $\cS$ the map $F_X^A\to F_Y^A$ is a cofibration in the projective structure on $\cS^\cC.$ In particular, the map $F_{\phi}^A\to F_{\Delta^0}^A$ is a cofibration in $\cS^\cC$, and thus
$$F_{\Delta^0}^A=\Map_{\cC}(A,-)\in \cS^\cC$$
is cofibrant.
\end{proof}

We now come to our main conclusion.
\begin{thm}\label{t:special}
Let $\cC$ be a small category with finite limits and colimits, that is tensored over $\cS_f$ and the action is weakly closed  (see Definition \ref{d:tensored} 1). Suppose that for every $A,B\in\cC$ the simplicial set $\Map_\cC(A,B)$ is a Kan complex. Consider the enriched dual Yoneda embedding
$$Y:A\mapsto \Map_{\cC}(A,-):\cC\to (\cS^\cC)^{\op}.$$
We endow the category $\cS^\cC$ with the projective model structure, and we define a map $f$ in $\cC$ to be a weak equivalence or a cofibration if $Y(f)$ is a weak equivalence or a cofibration in $(\cS^\cC)^{\op}$. Then $\cC$ is a special weak cofibration category.
\end{thm}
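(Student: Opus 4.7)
The plan is to deduce the theorem by combining Proposition \ref{p:special} with the auxiliary lemmas \ref{l:commute}, \ref{l:lands in cofibrant}, \ref{l:lands in fibrant}, plus one additional observation about simplicial homotopy equivalences. First I would set $\cD := (\cS^\cC)^{\op}$ and $J := Y$, and verify the hypotheses of Proposition \ref{p:special}. Since the projective model structure makes $\cS^\cC$ into a simplicial model category and the notion of a simplicial model category is self-dual (as explained in the paragraph preceding Lemma \ref{l:commute}), $\cD$ is a simplicial model category, hence in particular a simplicial weak cofibration category. Lemma \ref{l:commute} says that $Y$ commutes with finite colimits and the simplicial action, and Lemma \ref{l:lands in cofibrant} together with the Kan-complex hypothesis says that $Y$ lands in cofibrant objects of $\cD$. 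Applying Proposition \ref{p:special} gives that $\cC$ is a simplicial weak cofibration category in which every object is cofibrant.

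Next I would check conditions (1)--(3) of Definition \ref{d:special WCC}. Condition (1) is the hypothesis that $\cC$ has finite limits. For (2), every object of $\cC$ is already cofibrant by the previous step; for fibrancy, note that $Y$ is fully faithful (Yoneda) and lands in fibrant objects of $\cD$ by Lemma \ref{l:lands in fibrant}, so the last clause of Proposition \ref{p:special} gives that every object of $\cC$ is fibrant.

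The only remaining point is condition (3): every simplicial homotopy equivalence in $\cC$ is a weak equivalence. Here I would use that $Y$ is a simplicial functor (which follows from its commuting with the tensor action, via the adjunction defining the mapping space enrichment of $\cC$). Hence $Y$ carries a simplicial homotopy equivalence $f:A\to B$ in $\cC$ to a simplicial homotopy equivalence $Y(f)$ in $\cD$. Since by the first two steps $Y(A)$ and $Y(B)$ are both fibrant and cofibrant in the simplicial model category $\cD$, the classical fact that simplicial homotopy equivalences between fibrant--cofibrant objects in a simplicial model category are weak equivalences (which also follows from Proposition \ref{p:simplicial_right} applied to $\cD$) implies that $Y(f)$ is a weak equivalence in $\cD$, and thus by definition $f$ is a weak equivalence in $\cC$. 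The main subtlety in this argument is making sure that the simplicial enrichment of $\cC$ induced by its tensoring over $\cS_f$ is compatible with the simplicial enrichment of $\cD$ under $Y$, so that ``simplicial homotopy equivalence'' is genuinely preserved by $Y$; but this is a formal consequence of Lemma \ref{l:commute} via the two-variable adjunction for the action.
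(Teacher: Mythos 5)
Your proposal is correct, and its overall skeleton coincides with the paper's proof: both reduce the theorem to Proposition \ref{p:special} via Lemmas \ref{l:commute}, \ref{l:lands in cofibrant} and \ref{l:lands in fibrant}, leaving only condition (3) of Definition \ref{d:special WCC} to check. The one place you diverge is in justifying that condition: the paper simply unwinds the definition of weak equivalence in $\cC$ --- a simplicial homotopy equivalence $f$ induces, for each $D\in\cC$, a simplicial homotopy equivalence of Kan complexes $\Map_{\cC}(B,D)\to\Map_{\cC}(A,D)$, which is a weak equivalence of simplicial sets by \cite[Proposition 1.2.4.1]{Lur}, so $Y(f)$ is a projective weak equivalence --- whereas you route the argument through the model category $(\cS^{\cC})^{\op}$ itself, using that $Y$ is a simplicial functor preserving simplicial homotopy equivalences and that such equivalences between fibrant--cofibrant objects of a simplicial (right) model category are weak equivalences (Proposition \ref{p:simplicial_right}). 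Both are valid; the paper's version is more elementary in that it only needs a fact about Kan complexes applied objectwise, while yours reuses machinery already developed in Section \ref{s:left right model} and correctly flags the enrichment-compatibility point, which is indeed supplied by Lemma \ref{l:commute}.
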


\begin{proof}
It is well known that $Y$ is fully faithful. Thus, using Proposition \ref{p:special} and Lemmas \ref{l:commute}, \ref{l:lands in cofibrant}, \ref{l:lands in fibrant}, we are only left to show that a map in $\cC$ that is a homotopy equivalence in the simplicial category $\cC$ is also a weak equivalence.
But this follows from the definition of weak equivalences in $\cC$ and \cite[Proposition 1.2.4.1]{Lur}.
\end{proof}

\begin{rem}\label{r:equiv cof}
Let $\cC$ be as in Theorem \ref{t:special} and let $i: A\to B$ be a map in $\cC$. Then $i$ is a cofibration iff for every $D\in\cC$ and every acyclic cofibration $K\to L$ in $\cS_f$ and every commutative diagram of the form
$$
\xymatrix{
K \ar[d] \ar[r] & Map_{\cC}(B,D)\ar[d]^{i^*}\\
L\ar[r] & Map_{\cC}(A,D),
}$$
there exists a lift $L\to Map_{\cC}(B,D)$.
By adjointness this is equivalent to requiring that for every $D\in\cC$ and every acyclic cofibration $K\to L$ in $\cS_f$ and every diagram of the form

$$
\xymatrix{ K\otimes B\coprod_{K\otimes A}L\otimes A\ar[d]\ar[r] & D\\
 L\otimes B & }
$$
there exists a lift $L\otimes B\to  D$.
\end{rem}

\subsection{Compact metrizable spaces}
In this subsection we consider an application of Theorems \ref{t:main3} and \ref{t:special}. Let $\CM$ denote the category of compact metrizable spaces and continuous maps. Note that the category $\CM$ is naturally tensored over $\cS_f$ by the following action:
$$K\otimes X:=|K|\times X,$$
for $K\in\cS_f$ and $X\in\CM$ ($|-|$ denotes geometric realization).
Let
$$Y:A\mapsto \Map_{\CM}(A,-):\CM\to (\cS^{\CM})^{\op}$$
denote the enriched dual Yoneda embedding.

\begin{define}\label{d:CM WCC}
Consider the projective model structure on $\cS^{\CM}$. We define a map $f$ in $\CM$ to be a weak equivalence or a cofibration if $Y(f)$ is a weak equivalence or a cofibration in $(\cS^{\CM})^{\op}$.
\end{define}

\begin{prop}\label{p:CM is special}
With the weak equivalences and cofibrations defined in Definition \ref{d:CM WCC}, $\CM$ is a special weak cofibration category.
\end{prop}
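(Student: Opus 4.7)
The plan is to verify the three hypotheses of Theorem \ref{t:special} for $\cC = \CM$ equipped with the simplicial action $K \otimes X := |K| \times X$, where $|\cdot|$ denotes geometric realization. Since every compact metrizable space embeds in the Hilbert cube, $\CM$ is essentially small and we may silently replace it by a small skeleton. The coherence isomorphisms $L \otimes (K \otimes X) \cong (K \times L) \otimes X$ and $\Delta^0 \otimes X \cong X$ follow from the natural homeomorphism $|K \times L| \cong |K| \times |L|$ for finite simplicial sets, so the tensoring is well defined.

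Next I would verify that $\CM$ has finite limits and colimits and that the above action is weakly closed over $\cS_f$. Finite limits are immediate: finite products of compact metrizable spaces are compact metrizable, and equalizers are closed subspaces. Finite coproducts are disjoint unions, and coequalizers (hence pushouts) can be constructed as the quotient by the smallest closed equivalence relation identifying the two maps. Such a quotient is compact Hausdorff and second countable, hence compact metrizable by Urysohn's metrization theorem, and one checks that it is computed as the pushout in $\Top$ whenever the latter is already Hausdorff. For weak closedness, one uses that $|-|$ preserves all colimits and that $|K|$ is locally compact Hausdorff, so $(-) \times |K|$ preserves colimits in $\Top$; combined with the previous description of finite colimits in $\CM$, this yields preservation of finite colimits in each variable separately. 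I expect this paragraph to be the main technical obstacle, since the interplay between quotients in $\CM$ and cartesian products with $|K|$ needs to be handled carefully.

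Finally, I would check the Kan complex condition on mapping spaces. By construction,
$$\Map_{\CM}(A,B)_n \;=\; \Hom_{\CM}(|\Delta^n| \times A,\, B),$$
and by the exponential adjunction for the locally compact Hausdorff space $A$ this is naturally bijective with $\Hom_{\Top}(|\Delta^n|, B^A)$, i.e.\ with the $n$-simplices of the singular simplicial set $\Sing(B^A)$ of the compact-open mapping space. Since $\Sing$ of any topological space is a Kan complex, the hypothesis holds for all $A,B \in \CM$. Theorem \ref{t:special} now applies and yields that $\CM$, with the weak equivalences and cofibrations of Definition \ref{d:CM WCC}, is a special weak cofibration category.
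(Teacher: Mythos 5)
Your proposal is correct and follows essentially the same route as the paper: both verify the hypotheses of Theorem \ref{t:special}, with the identical key step $\Map_{\CM}(A,B)\cong\Sing(\Hom_{\Top}(A,B))$ via the exponential law for locally compact Hausdorff spaces. The only (minor) divergence is that you establish essential smallness by embedding into the Hilbert cube, whereas the paper uses the Gel'fand--Na{\u{\i}}mark correspondence with separable commutative unital $C^*$-algebras; your added detail on finite colimits and weak closedness fills in a step the paper dismisses as ``not hard to see'', and is sound.
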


\begin{proof}
By the Gel'fand--Na{\u{\i}}mark correspondence, the category of compact Hausdorff spaces and continuous maps between them is equivalent to the opposite category of commutative unital $C^*$-algebras and unital $*$-homomorphisms between them. Restricting to separable $C^*$-algebras, we obtain an equivalence between our category $\CM$ of compact metrizable spaces and the opposite category of commutative unital separable $C^*$-algebras. Since every unital separable $C^*$-algebra is isomorphic to a unital sub-$C^*$-algebra of the $C^*$-algebra of bounded operators on $l^2$, we see that the category of unital separable $C^*$-algebras is essentially small. In particular, we get that the category $\CM$ is essentially small. We can therefore assume that we are working with an equivalent small category, and we will do so without further mentioning.

It is not hard to see that the category $\CM$ has finite limits and colimits and the left action of $\cS_f$ on $\CM$ defined above is weakly closed.

Let $\Top$ denote the category of topological spaces. For $A,B\in \Top$, we denote by $\Hom_{\Top}(A,B)$ the set of continuous maps from $A$ to $B$ endowed with the compact open topology. Then, if $B$ is locally compact Hausdorff, we have a natural bijection
$$\Top(A,\Hom_{\Top}(B,C))\cong \Top(A\times B,C).$$
In particular, we see that for every $A,B\in \CM$ we have
$$\Sing(\Hom_{\Top}(A,B))_n\cong \Top(|\Delta^n|,\Hom_{\Top}(A,B))\cong \CM(\Delta^n\otimes A,B).$$
It follows that $\Sing(\Hom_{\Top}(A,B))\cong\Map_{\CM}(A,B)$ and thus the simplicial set $\Map_{\CM}(A,B)$ is a Kan complex. Now the proposition follows from Theorem \ref{t:special}.
\end{proof}

Our objective now is to give a more familiar description to the weak equivalences and the cofibrations in $\CM$.

\begin{define}\label{d:Hurewicz cofibration}
A map $i: X\to Y$ in $\CM$ is called a \emph{Hurewicz cofibration} if for every $Z\in\CM$ and every diagram of the form
$$
\xymatrix{ \{0\}\times Y\coprod_{\{0\}\times X}[0,1]\times X\ar[d]\ar[r] & Z\\
 [0,1]\times Y & }
$$
there exists a lift $[0,1]\times Y\to  Z$.
\end{define}

\begin{rem}\label{r:left inverse}
By taking $Z$ to be $\{0\}\times Y\coprod_{\{0\}\times X}[0,1]\times X$ and the horizontal arrow to be the identity in the definition above, it is not hard to see that a map $X\to Y$ is a Hurewicz cofibration iff the induced map
$$\{0\}\times Y\coprod_{\{0\}\times X}[0,1]\times X\to[0,1]\times Y$$
admits a left inverse (that is, a retracting map).

Since the cartesian product in $\CM$ commutes with finite colimits, if follows that for every Hurewicz cofibration $X\to Y$ in $\CM$ and every $A\in \CM$, the map  that $A\times X\to A\times Y$ is also a Hurewicz cofibration
\end{rem}

\begin{rem}\label{r:equiv Hurewicz}
By the exponential law for $\Top$ considered in the proof of Proposition \ref{p:CM is special}, it follows that a map $i: X\to Y$ in $\CM$ is a Hurewicz cofibration iff for every $Z\in\CM$ and every commutative diagram of the form
$$
\xymatrix{
\{0\} \ar[d] \ar[r] & \Hom_{\Top}(Y,Z)\ar[d]^{i^*}\\
[0,1]\ar[r] & \Hom_{\Top}(X,Z),
}$$
there exists a lift $[0,1]\to \Hom_{\Top}(Y,Z)$.
\end{rem}

\begin{lem}\label{equiv Cof}
Let $i: X\to Y$ be a map in $\CM$. Then the map $i$ is a Hurewicz cofibration iff for every $Z\in\CM$ the induced map $\Hom_{\Top}(Y,Z)\to \Hom_{\Top}(X,Z)$ is a Serre fibration.
\end{lem}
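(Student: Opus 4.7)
The plan is to reduce both directions to Remark \ref{r:equiv Hurewicz} (which characterises Hurewicz cofibrations as maps whose pushforward has the right lifting property against $\{0\}\hookrightarrow[0,1]$) combined with Remark \ref{r:left inverse} (which says Hurewicz cofibrations are stable under product with any object of $\CM$), and to move between the two sides of the equivalence via the exponential law for topological spaces recalled in the proof of Proposition \ref{p:CM is special}.

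The ``if'' direction is immediate: if $i^*\colon \Hom_{\Top}(Y,Z)\to\Hom_{\Top}(X,Z)$ is a Serre fibration for every $Z\in\CM$, then it has the right lifting property with respect to $\{0\}=D^0\times\{0\}\hookrightarrow D^0\times[0,1]=[0,1]$, and then Remark \ref{r:equiv Hurewicz} directly gives that $i$ is a Hurewicz cofibration. For the ``only if'' direction, fix $Z\in\CM$ and a lifting problem
$$
\xymatrix{
D^n\times\{0\}\ar[d]\ar[r] & \Hom_{\Top}(Y,Z)\ar[d]^{i^*}\\
D^n\times[0,1]\ar[r] & \Hom_{\Top}(X,Z).
}
$$
Since $D^n\times[0,1]$ and $[0,1]$ are locally compact Hausdorff and $X,Y,D^n\in\CM$, two applications of the exponential adjunction $\Top(A,\Hom_{\Top}(B,C))\cong\Top(A\times B,C)$ (first shifting $D^n$ into the source and then pulling it into the second argument of the mapping space) identify this with the lifting problem
$$
\xymatrix{
\{0\}\ar[d]\ar[r] & \Hom_{\Top}(D^n\times Y,Z)\ar[d]^{(D^n\times i)^*}\\
[0,1]\ar[r] & \Hom_{\Top}(D^n\times X,Z).
}
$$
By Remark \ref{r:left inverse}, $D^n\times i\colon D^n\times X\to D^n\times Y$ is a Hurewicz cofibration in $\CM$, so Remark \ref{r:equiv Hurewicz} furnishes a lift, which transports back through the adjunction to solve the original problem. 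Thus $i^*$ is a Serre fibration.

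The only real thing to check is that the adjunction step is legitimate; this is the main (minor) obstacle. It holds because all the spaces involved are either locally compact Hausdorff (so that $\Hom_{\Top}(B,-)$ with the compact-open topology is genuinely right adjoint to $(-)\times B$) or compact metrizable, and $D^n\in\CM$ so the hypotheses of Remarks \ref{r:equiv Hurewicz} and \ref{r:left inverse} are satisfied verbatim. No further point-set subtleties arise, and no additional structure on $\CM$ is needed beyond what has already been established.
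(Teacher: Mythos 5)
Your proposal is correct and follows essentially the same route as the paper: both directions are reduced via the exponential law to Remark \ref{r:equiv Hurewicz}, and the ``only if'' direction tests the Serre fibration condition against the generating cofibrations $D^n\times\{0\}\hookrightarrow D^n\times[0,1]$ (the paper uses $[0,1]^n$ in place of $D^n$, which is immaterial) and invokes Remark \ref{r:left inverse} to see that $D^n\times i$ is again a Hurewicz cofibration. No gaps.
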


\begin{proof}
If for every $Z\in\CM$ the induced map $\Hom_{\Top}(Y,Z)\to \Hom_{\Top}(X,Z)$ is a Serre fibration, then clearly $i$ is a Hurewicz cofibration (see Remark \ref{r:equiv Hurewicz}).

Suppose $i$ is a Hurewicz cofibration. Let $Z$ be an object in $\CM$. It is enough to show that for every $n\geq 0$ and every commutative diagram of the form
$$
\xymatrix{
{\{0\}}\times [0,1]^{n} \ar[d] \ar[r] & \Hom_{\Top}(Y,Z)\ar[d]^{i^*}\\
[0,1]\times [0,1]^{n}\ar[r] & \Hom_{\Top}(X,Z),
}$$
there exists a lift $[0,1]^{n+1}\to \Hom_{\Top}(Y,Z)$.
By the exponential law for $\Top$ considered in the proof of Proposition \ref{p:CM is special}, this is equivalent to requiring that for every $n\geq 0$ and every diagram of the form
$$
\xymatrix{ \{0\}\times [0,1]^n\times Y\coprod_{\{0\}\times [0,1]^n\times X}[0,1]\times [0,1]^n\times X\ar[d]\ar[r] & Z\\
 [0,1]\times [0,1]^n\times Y & }
$$
there exists a lift $[0,1]^{n+1}\times Y\to Z$.
Again by the same rule, this is equivalent to requiring that for every $n\geq 0$ and every commutative diagram of the form

$$
\xymatrix{
{\{0\}} \ar[d] \ar[r] & \Hom_{\Top}( [0,1]^{n}\times Y,Z)\ar[d]\\
[0,1]\ar[r] & \Hom_{\Top}([0,1]^{n}\times X,Z),
}$$
there exists a lift $[0,1]\to \Hom_{\Top}( [0,1]^{n}\times Y,Z)$. But since $i:X\to Y$ is a Hurewicz cofibration, we have by Remark \ref{r:left inverse} that $[0,1]^{n}\times X\to [0,1]^{n}\times Y$ is also a Hurewicz cofibration, so the lift exists by Remark \ref{r:equiv Hurewicz}.
\end{proof}

\begin{prop}\label{p:Hurewicz}
Let $i: X\to Y$ be a map in $\CM$. Then the following hold:
\begin{enumerate}
\item The map $i$ is a weak equivalence in $\CM$ iff it is a homotopy equivalence.
\item The map $i$ is a cofibration in $\CM$ iff it is a Hurewicz cofibration.
\end{enumerate}
\end{prop}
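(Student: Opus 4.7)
The plan is to unfold Definition \ref{d:CM WCC} via the $|{-}|\dashv\Sing$ adjunction and reduce both claims to classical facts about mapping spaces in $\Top$. Recall from the proof of Proposition \ref{p:CM is special} the natural isomorphism $\Map_{\CM}(A,B)\cong\Sing\Hom_{\Top}(A,B)$, and note that $\Sing$ both preserves and reflects weak equivalences as well as Serre fibrations (the latter directly by the adjunction, since Serre fibrations are exactly the maps with RLP against realizations of horn inclusions). Combining this with the projective model structure on $\cS^{\CM}$ and Remark \ref{r:equiv cof}, one finds that $i\colon X\to Y$ is a weak equivalence in $\CM$ iff $i^*\colon \Hom_{\Top}(Y,D)\to \Hom_{\Top}(X,D)$ is a weak homotopy equivalence for every $D\in\CM$, and that $i$ is a cofibration in $\CM$ iff $i^*\colon \Map_{\CM}(Y,D)\to \Map_{\CM}(X,D)$ is a Kan fibration for every $D$ (since the horn inclusions already lie in $\cS_f$, the RLP against all acyclic cofibrations in $\cS_f$ amounts exactly to being a Kan fibration).

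For part (1), the ``if'' direction is immediate: a homotopy inverse $r\colon Y\to X$ of $i$ gives a literal homotopy inverse $r^*$ of $i^*$ on each $\Hom_{\Top}(-,D)$, hence in particular a weak equivalence. For the ``only if'' direction I would run a Yoneda-style argument using only $\pi_0$. Taking $D=X$ and using surjectivity of $\pi_0(i^*)$, one obtains $r\colon Y\to X$ with $r\circ i\simeq\id_X$. Taking $D=Y$ and observing that $i^*(\id_Y)=i\simeq i\circ r\circ i = i^*(i\circ r)$, injectivity of $\pi_0(i^*)$ forces $i\circ r\simeq \id_Y$, so $i$ is a homotopy equivalence.

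For part (2), the ``only if'' direction is obtained by specializing Remark \ref{r:equiv cof} to the acyclic cofibration $\Delta^{\{0\}}\hookrightarrow\Delta^1$ in $\cS_f$, whose geometric realization is $\{0\}\hookrightarrow[0,1]$: the resulting lifting diagram is precisely the defining condition of a Hurewicz cofibration in Definition \ref{d:Hurewicz cofibration}. Conversely, if $i$ is a Hurewicz cofibration, Lemma \ref{equiv Cof} makes $i^*\colon \Hom_{\Top}(Y,D)\to \Hom_{\Top}(X,D)$ a Serre fibration for every $D$, so $\Sing(i^*)$ is a Kan fibration, and the translation above yields that $i$ is a cofibration in $\CM$. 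The main work has already been done in the earlier lemmas and remarks; no real obstacle remains, and the role of the Yoneda-type argument in part (1) is the only conceptual ingredient beyond mechanical unwinding of definitions.
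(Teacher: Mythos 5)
Your proposal is correct and follows essentially the same route as the paper: part (2) is proved exactly as in the paper (specializing Remark \ref{r:equiv cof} to $\Delta^{\{0\}}\hookrightarrow\Delta^1$ for one direction, and combining Lemma \ref{equiv Cof} with the fact that $\Sing$ sends Serre fibrations to Kan fibrations for the other). The only difference is in part (1), where the paper simply cites \cite[Proposition 1.2.4.1]{Lur} while you inline its standard $\pi_0$-Yoneda proof; this is a valid substitution, not a different approach.
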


\begin{proof}\
\begin{enumerate}
\item This follows from Definition \ref{d:CM WCC} and \cite[Proposition 1.2.4.1]{Lur}.
\item By Remark \ref{r:equiv cof}, we see that a map $i: X\to Y$ in $\CM$ is a cofibration iff for every $Z\in\CM$ and every acyclic cofibration $K\to L$ in $\cS_f$ and every diagram of the form

$$
\xymatrix{ |K|\times Y\coprod_{|K|\times X}|L|\times X\ar[d]\ar[r] & Z\\
 |L|\times Y & }
$$
there exists a lift $|L|\times Y\to  Z$. It follows that every cofibration in $\CM$ is a Hurewicz cofibration. The opposite direction follows from Lemma \ref{equiv Cof} and the fact that the functor
$\Sing:\Top\to\cS$
sends Serre fibrations to Kan fibrations.
\end{enumerate}
\end{proof}

\begin{prop}\label{p:CM_monoidal}
The weak cofibration category $\CM$ described in Proposition \ref{p:CM is special} is a monoidal weak cofibration category, under the categorical product.
\end{prop}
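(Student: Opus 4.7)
The plan is to verify the two conditions of Definition~\ref{d:monoidal} for the triple $(\CM,\times,\pt)$. Condition (2) is immediate: the unit $\pt$ is cofibrant because the unique map $\emptyset\to\pt$ from the initial object of $\CM$ is vacuously a Hurewicz cofibration (any lifting problem out of it is empty), and by Proposition~\ref{p:Hurewicz} this is exactly the notion of cofibration in $\CM$. The real content is condition (1), i.e. showing that $\times\colon\CM\times\CM\to\CM$ is a weak left Quillen bifunctor in the sense of Definition~\ref{d:WLQB}.

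First I would show that $\times$ commutes with finite colimits in each variable separately. For any $A\in\CM$, the space $A$ is compact Hausdorff and hence locally compact Hausdorff, so $A\times(-)\colon\Top\to\Top$ is a left adjoint (with right adjoint $\Hom_{\Top}(A,-)$ carrying the compact-open topology) and therefore preserves all colimits in $\Top$. For the finite colimits arising in the pushout-product construction below, the underlying maps are Hurewicz cofibrations between compact metrizable spaces, and the corresponding pushouts in $\Top$ remain compact metrizable; hence they also compute the colimit in $\CM$. This commutation is in fact already used in Remark~\ref{r:left inverse}.

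Next I would verify the pushout-product axiom. Using Proposition~\ref{p:Hurewicz}, the statement becomes: for Hurewicz cofibrations $i\colon L\to K$ and $j\colon X\to Y$ in $\CM$, the pushout-product
\[
K\times X\coprod_{L\times X} L\times Y\longrightarrow K\times Y
\]
is a Hurewicz cofibration, which is a homotopy equivalence if either $i$ or $j$ is. This is the classical pushout-product theorem from the Str\o m model structure on $\Top$. The standard argument proceeds via the characterization of Hurewicz cofibrations as $\NDR$-pairs: if $(K,L)$ and $(Y,X)$ are $\NDR$-pairs, then $(K\times Y,\,K\times X\cup L\times Y)$ is again an $\NDR$-pair, the representation functions and deformations being built by an explicit product formula. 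This transfers to $\CM$ because all spaces appearing remain compact metrizable.

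The main obstacle is the acyclic case. A clean way is through the classical result that an acyclic Hurewicz cofibration is precisely a closed inclusion whose image is a strong deformation retract of the total space. Given such an $i\colon L\to K$ and any Hurewicz cofibration $j\colon X\to Y$, one uses a retraction $r\colon K\to L$ with $ir\simeq \id_K$ rel $L$ together with the $\NDR$-data for $j$ to construct an explicit strong deformation retraction of $K\times Y$ onto $K\times X\cup L\times Y$; the retraction restricts correctly because the deformation of $K$ to $L$ is the identity on $L$ and compatible with the coordinates of $Y$. This is a classical but somewhat delicate computation carried out in Str\o m's original papers, and adapting it to $\CM$ is straightforward once compact metrizability of all intermediate spaces has been checked.
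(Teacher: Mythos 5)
Your proof is correct and follows essentially the same route as the paper, which simply observes that Hurewicz cofibrations in $\CM$ are closed embeddings (hence closed/NDR pairs) and then cites the classical product theorems of Str\o m and Whitehead for the pushout-product axiom, handling the acyclic case via strong deformation retracts exactly as you describe. One tiny slip: the lifting problem showing $\emptyset\to\pt$ is a Hurewicz cofibration is not empty (the pushout in the test square is $\{0\}\times\pt$, not $\emptyset$), but it is solved by the constant homotopy, so the unit is still cofibrant.
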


\begin{proof}
By taking $Z$ to be $\{0\}\times Y\coprod_{\{0\}\times X}[0,1]\times X$ and the horizontal arrow to be the identity in Definition \ref{d:Hurewicz cofibration}, it is not hard to see that every Hurewicz cofibration $X\to Y$ is an embedding. Since $X$ is compact and $Y$ is Hausdorff, it is also a closed embedding.
Now the result follows from Proposition \ref{p:Hurewicz}, \cite[Theorem 6]{Str} and \cite[Theorem 5.9]{Whi}.
\end{proof}

We obtain the following:

\begin{thm}\label{t:main4}
Let $\cW$ denote the class of weak equivalences and let $\cC$ denote the class of cofibrations in $\CM$. Then there exists a simplicial model category structure on $\Ind(\CM)$ such that:
\begin{enumerate}
\item The weak equivalences are $\mathbf{W} := \Lw^{\cong}(\cW)$.
\item The cofibrations are $\mathbf{C} := \R(\coSp^{\cong}(\cC))$.
\item The fibrations are $\mathbf{F} :=(\cC\cap \cW)^{\perp}$.
\end{enumerate}

Moreover, this model category is finitely combinatorial, with set of generating cofibrations $\cC$ and set of generating acyclic cofibrations $\cC\cap \cW$.

The model category $\Ind(\CM)$ has the following further properties:
\begin{enumerate}
\item Every object in $\Ind(\CM)$ is fibrant.
\item $\Ind(\CM)$ is proper.
\item $\Ind(\CM)$ is a cartesian monoidal model category.
\end{enumerate}
\end{thm}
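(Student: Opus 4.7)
The plan is to assemble the statement from results already proved in the paper, with essentially no new work. Specifically, the bulk of the theorem (the existence of the simplicial model structure with the prescribed classes, its finite combinatoriality, fibrancy of every object, and properness) is a direct consequence of Theorem \ref{t:main3} once we know that $\CM$ is a special weak cofibration category, which is exactly Proposition \ref{p:CM is special}. So the first step is simply to invoke Theorem \ref{t:main3} applied to $(\CM,\cW,\cC)$; this instantly yields the descriptions of $\mathbf{W}$, $\mathbf{C}$, $\mathbf{F}$, the fact that $\cC$ and $\cC\cap\cW$ are generating sets with finitely presentable domains and codomains, universal fibrancy, and two-sided properness.

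The only new content to verify is the cartesian monoidal model structure. Here I would proceed as follows. By Proposition \ref{p:CM_monoidal}, $\CM$ equipped with the categorical product is a monoidal weak cofibration category; since $\CM$ is in particular ind-admissible (hence almost ind-admissible) by Proposition \ref{p:CM is special} and Theorem \ref{t:main2}, Proposition \ref{p:monoidal} applies and yields a monoidal almost model structure on $\Ind(\CM)$ relative to the natural prolongation of the product. Because the underlying almost model structure coincides with the genuine model structure produced in the previous paragraph, this is automatically a monoidal model structure in the usual sense: the pushout-product and unit axioms are precisely the content of the ``left Quillen bifunctor'' and ``cofibrant unit'' requirements in Definition \ref{d:monoidal_almost}, and these do not see the distinction between an almost model category and an actual model category.

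Finally, to see that the resulting monoidal structure really is the cartesian one on $\Ind(\CM)$, I would appeal to Remark \ref{r:prolong cartesian}, which identifies the prolongation of the categorical product on $\CM$ with the categorical product on $\Ind(\CM)$. (This uses that $\Ind(\CM)$ is finitely locally presentable, so filtered colimits commute with finite products.) Combining these observations gives that $\Ind(\CM)$ is a cartesian monoidal model category, completing the proof.

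I do not expect any real obstacle here: every ingredient has been packaged in advance, and the argument is essentially a chain of citations. The only place requiring the slightest care is the last paragraph, where one must observe that Proposition \ref{p:monoidal} is stated in the almost-model-category setting but automatically produces a genuine monoidal model category whenever the underlying almost model structure happens to satisfy full two-out-of-three, which it does here thanks to Theorem \ref{t:main3}.
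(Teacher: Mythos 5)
Your proposal is correct and follows essentially the same route as the paper: invoke Proposition \ref{p:CM is special} and Theorem \ref{t:main3} for everything except cartesian monoidality, then combine Remark \ref{r:prolong cartesian}, Proposition \ref{p:CM_monoidal} and Proposition \ref{p:monoidal} for Property (3). The only difference is that you spell out explicitly why the almost-model-category formulation of Proposition \ref{p:monoidal} upgrades to a genuine monoidal model structure, a point the paper leaves implicit.
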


\begin{proof}
By Proposition \ref{p:CM is special} $\CM$ is a special weak cofibration category, so the result is given almost entirely by Theorem \ref{t:main3}. We only need to show Property (3) above.

We have shown in  \cite{BaSc3} that the
natural prolongation of the categorical product in $\CM$ to a bifunctor
$$\Ind(\CM)\times \Ind(\CM)\to \Ind(\CM)$$
is exactly the categorical product in $\Ind(\CM)$.
Now the result follows from Proposition \ref{p:CM_monoidal} and Proposition \ref{p:monoidal}.
\end{proof}


Department of Mathematics, University of Muenster, Nordrhein-Westfalen, Germany.

\emph{E-mail address}:
\texttt{ilanbarnea770@gmail.com}

\end{document}